\documentclass[12pt, reqno, twoside, letterpaper]{amsart}


\usepackage{amsmath,amssymb,amsbsy,amsfonts,amsthm,latexsym,
            amsopn,amstext,amsxtra,euscript,amscd,stmaryrd,mathrsfs,
            cite,array,comment}

\numberwithin{equation}{section}


\usepackage{color}
\usepackage[usenames,dvipsnames,svgnames,table]{xcolor}



\def\eps{\varepsilon}
\def\mand{\qquad\mbox{and}\qquad}
\def\fl#1{\left\lfloor#1\right\rfloor}

\def\({\left(}
\def\){\right)}

\newcommand{\e}{\ensuremath{\mathbf{e}}}


\newcommand{\cB}{\ensuremath{\mathcal{B}}}

\newcommand{\cI}{\ensuremath{\mathcal{I}}}

\newcommand{\cN}{\ensuremath{\mathcal{N}}}

\newcommand{\cS}{\ensuremath{\mathcal{S}}}

\newcommand{\cT}{\ensuremath{\mathcal{T}}}



\newcommand{\fS}{\ensuremath{\mathfrak{S}}}


\newcommand{\RR}{\ensuremath{\mathbb{R}}}






\usepackage[text={144mm,233mm},centering]{geometry} 

\setlength{\parindent}{15pt}


\usepackage{amsthm}
\newtheoremstyle{customthm}
{1em}                    
{1em}                    
{\itshape}               
{}                       
{\scshape}               
{.}                      
{5pt plus 1pt minus 1pt} 
{}                       

\newtheoremstyle{customrem}
{1em}                    
{1em}                    
{}                       
{}                       
{\scshape}               
{.}                      
{5pt plus 1pt minus 1pt} 
{}                       

\theoremstyle{customthm}

\newtheorem{X}{X}[section]
\newtheorem{theorem}[X]{Theorem}  
  
\newtheorem{lemma}[X]{Lemma}
\newtheorem{corollary}[X]{Corollary}

\theoremstyle{customrem}

\usepackage{etoolbox}
\AtEndEnvironment{remark}{\null\hfill\qedsymbol}
\newtheorem{definition}[X]{Definition}
\AtEndEnvironment{definition}{\null\hfill\qedsymbol}


\renewcommand{\le}{\ensuremath{\leqslant}}
\renewcommand{\ge}{\ensuremath{\geqslant}}



\renewcommand{\pod}[1]{\mathchoice
  {\allowbreak \if@display \mkern 5mu\else \mkern 5mu\fi (#1)}
  {\allowbreak \if@display \mkern 5mu\else \mkern 5mu\fi (#1)}
  {\mkern4mu(#1)}
  {\mkern4mu(#1)}
}






\DeclareSymbolFont{EUEX}{U}{euex}{m}{n}

\DeclareSymbolFont{euexlargesymbols}{U}{euex}{m}{n}
\DeclareMathSymbol{\intop}{\mathop}{euexlargesymbols}{"52}
     \def\int{\intop\nolimits}

\DeclareSymbolFont{euexsymbols}     {U}{euex}{m}{n}
\DeclareMathSymbol{\smallint}{\mathop}{euexsymbols}{"52}
                     
\allowdisplaybreaks


\title[Exponential sums related to Piatetski-Shapiro primes]
      {Improvements on exponential sums related to Piatetski-Shapiro primes}
        
\author[Li Lu]{Li Lu}

\address{School of Mathematics and Statistics, Xi'an Jiaotong University, Xi'an,Shaanxi,China.}

\email{lilu\_math@foxmail.com}

\author[Lingyu Guo]{Lingyu Guo}

\address{School of Mathematics and Statistics, Xi'an Jiaotong University, Xi'an,Shaanxi,China.}

\email{guo.lingyu@foxmail.com }

\author[Victor Zhenyu Guo]{Victor Zhenyu Guo}

\address{School of Mathematics and Statistics, Xi'an Jiaotong University, Xi'an,Shaanxi,China.}

\email{guozyv@xjtu.edu.cn; vzguo@foxmail.com}

\date{\today}

\begin{document}

\setlength{\extrarowheight}{5pt}

\begin{abstract}
We prove a new bound to the exponential sum of the form
$$
\sum_{h \sim H}\delta_h \mathop{\sum_{m\sim M}\sum_{n\sim N}}_{mn\sim x}a_{m}b_{n}\e\big(\alpha mn + h(mn + u)^{\gamma}\big),
$$
by a new approach to the Type I sum. The sum can be applied to many problems related to Piatetski-Shapiro primes, which are primes of the form $\lfloor n^c \rfloor$. In this paper, we improve the admissible range of the Balog-Friedlander condition, which leads to an improvement to the ternary Goldbach problem with Piatetski-Shapiro primes. We also investigate the distribution of Piatetski-Shapiro primes in arithmetic progressions, Piatetski-Shapiro primes in a Beatty sequence and so on. 
\end{abstract}

\maketitle

\begin{quote}
\textbf{MSC Numbers:} 11N05; 11B83; 11L07.
\end{quote}

\begin{quote}
\textbf{Keywords:} exponential sums; Piateski-Shapiro primes; ternary Goldbach problem.
\end{quote}

\newcommand{\tind}[1]{\ensuremath{\widetilde{\mathbf{1}}_{#1}}} 


\section{Introduction and main results}
\label{Intro}

The Piatetski-Shapiro sequences are sequences of the form
$$
\cN^{(c)} = (\lfloor n^{c} \rfloor)_{n=1}^\infty,
$$
where $\fl{\cdot}$ is the integer part. Piatetski-Shapiro~\cite{PS} proved the Piatetski-Shapiro prime number theorem stating that for $ 1 < c < \frac{12}{11}$ the counting function
$$
\pi^{(c)}(x) = \# \big\{\text{\rm prime~} p\le x : p \in \cN^{(c)} \big\}
$$
satisfies the asymptotic relation
\begin{equation}
\label{eq:PS}
\pi^{(c)}(x) = (1 + o(1)) \frac{x^{1/c}}{\log x} \qquad \text{~\rm as } x \to \infty.
\end{equation}
The admissible range for $c$ of the above formula has been extended many times and is currently known to hold for all $1 < c < \frac{2817}{2426}$ thanks to Rivat and Sargos~\cite{RiSa}. Rivat and Wu~\cite{RiWu} also showed that there are infinitely many Piatetski-Shapiro primes for $1 < c < \frac{243}{205}$ without an asymptotic formula. We refer the readers to~\cite{Guo2} for more details of the improvements of $c$. The asymptotic relation is expected to hold for all values of $1 < c < 2$. The estimation of Piatetski-Shapiro primes is an approximation of the well-known conjecture that there exist infinitely many primes of the form $n^2+1$. 

In this article, we mainly consider three types of problems related to Piatetski-Shapiro primes by proving a new bound on 
an special type of exponential sums which improves the bound by Kumchev \cite{Kumc}. Several results are improved based on our new estimation of the exponential sum. Throughout the paper, we always denote $\gamma = c^{-1}$ for any subscripts. 

\subsection{The ternary Goldbach problem on Piatetski-Shapiro primes}
A natural problem to study prime numbers is the ternary Goldbach problem, which is proved by Vinogradov that every odd sufficiently large integer is a sum of three primes. In 1992, Balog and Friedlander \cite{BaFr} considered the ternary Goldbach problem on Piatetski-Shapiro primes and proved that for every sufficiently large $N$
\begin{equation}
\label{eq:PS3}
\sum_{\substack{p_1 + p_2 + p_3 = N \\ p_i \in \mathcal{N}^{(c_i)}}} 1 = (1+o(1)) \frac{\gamma_1\gamma_2\gamma_3 \Gamma(\gamma_1) \Gamma(\gamma_2) \Gamma(\gamma_3)}{\Gamma(\gamma_1 + \gamma_2 + \gamma_3)} \frac{\mathfrak{S}(N)N^{\gamma_1 + \gamma_2 + \gamma_3 -1}}{\log^3 N}, 
\end{equation}
where $\Gamma(\cdot)$ is the Gamma function and $\mathfrak{S}(N)$ is the singular series which is defined as
\begin{equation}
\label{eq:ss}
\mathfrak{S}(N) = \prod_{p | N} \bigg(1-\frac{1}{(p-1)^2} \bigg) \prod_{p \nmid N} \bigg(1 + \frac{1}{(p-1)^3} \bigg),
\end{equation}
with $1 < c_1, c_2, c_3 < \frac{21}{20}$. The admissible range of $c$ was improved several times as the following table. 

\begin{center}\label{tab:3p}
	\begin{tabular}{||c|c||}
		\hline
		\vphantom{\Big|} Authors  &  The upper bound of $c$ \\
		\hline
		Balog and Friedlander \cite{BaFr} & $\frac{21}{20} = 1.05$  \\
		Rivat \cite{Riva} & $\frac{199}{188} = 1.0585...$  \\
		Kumchev \cite{Kumc} & $\frac{53}{50} = 1.06$  \\
		Sun, Pan and Du \cite{SDP} & $\frac{73}{64} = 1.1406...$ \\ [0.03in]
		\hline
	\end{tabular}
\end{center}

We improve the admissible range of $c$ and prove the following theorem. 

\begin{theorem}
\label{thm:3ps}
With $1 < c_1, c_2, c_3 < \frac{569}{498} = 1.1425\dots$, \eqref{eq:PS3} holds. 
\end{theorem}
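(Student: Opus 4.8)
The plan is to follow the standard circle-method framework used by Balog--Friedlander, Rivat, Kumchev and Sun--Pan--Du, but feed into it the improved exponential-sum bound announced in the abstract. First I would reduce the counting problem \eqref{eq:PS3} to a weighted count over ordinary primes. The indicator of $p \in \cN^{(c)}$ can be written, via the identity $\ind{p \in \cN^{(c)}} = \fl{-p^{\gamma}} - \fl{-(p+1)^{\gamma}}$, as $\gamma p^{\gamma-1} + \psi((p+1)^{\gamma}) - \psi(p^{\gamma}) + O(p^{\gamma-2})$, where $\psi(t) = \{t\} - 1/2$ is the sawtooth function. Expanding $\psi$ into a truncated Fourier series (Vaaler's approximation) replaces each Piatetski-Shapiro condition by a main term contributing the density factor $\gamma_i p_i^{\gamma_i - 1}$ together with error terms of the shape $\sum_{1 \le |h| \le H} c_h \e(h p^{\gamma})$. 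The main term, inserted into the Vinogradov three-primes machinery, yields exactly the right-hand side of \eqref{eq:PS3} with the Gamma factors and the singular series $\mathfrak{S}(N)$; this part is routine and identical to the earlier papers. Everything therefore comes down to showing that the contribution of the oscillatory terms is $o\big(N^{\gamma_1+\gamma_2+\gamma_3-1}/\log^3 N\big)$.

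Next I would set up the exponential sums. After multiplying out the three sawtooth expansions and applying the circle method, the error terms are controlled by bounds on $\sum_{n \sim N} \Lambda(n)\, \e(\alpha n + h(n+u)^{\gamma})$ uniformly in $\alpha$, for $h$ in a dyadic range $h \sim H$ with $H$ a small power of $N$. Using Vaughan's identity (or Heath-Brown's identity), this von Mangoldt-weighted sum decomposes into $O(\log^2 N)$ sums of Type I and Type II shape, namely sums $\sum_{m \sim M}\sum_{n \sim N/M} a_m b_n\, \e(\alpha mn + h(mn+u)^{\gamma})$ with the $a_m, b_n$ suitably bounded and, in the Type I case, $b_n$ either $1$ or $\log n$ and $M$ not too large. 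This is precisely the exponential sum treated in the main theorem of the present paper; in particular the new Type I treatment advertised in the abstract enlarges the range of $M$ (equivalently, relaxes the Balog--Friedlander condition on the linear form $\alpha mn$) for which square-root-type cancellation is available. I would quote that theorem as a black box and verify that, for $c < 569/498$, the resulting decay saves enough over the trivial bound $N$ on both minor and major arcs.

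The main obstacle — and the step where the new input is essential — is the minor-arc analysis of the Type I sums when $\alpha$ is close to a rational with small denominator. Here the linear phase $\alpha mn$ and the Piatetski-Shapiro phase $h(mn+u)^{\gamma}$ must be handled simultaneously: one cannot simply discard $\alpha$ and apply van der Corput / exponent-pair estimates to $h(mn+u)^{\gamma}$ alone, because on the major arcs the $\alpha mn$ term carries the arithmetic information, while a naive pairing loses too much. The improved Type I estimate circumvents this by a more efficient treatment of the $m$-sum (essentially a Weyl–van der Corput step followed by a careful stationary-phase or large-sieve analysis of the combined phase), which is what pushes the admissible exponent from Sun--Pan--Du's $73/64$ up to $569/498$. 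Concretely I would: (i) record the dyadic decomposition and the admissible ranges of $M$ coming from the cited theorem; (ii) on the major arcs, extract the main term and bound the $\psi$-contributions using the improved Type I bound with $\alpha$ replaced by its rational approximation plus a small error; (iii) on the minor arcs, combine the improved Type I and the standard Type II bounds to get a power saving; (iv) sum over the dyadic ranges of $h$ and over the arcs, check that the total error is $o(N^{\gamma_1+\gamma_2+\gamma_3-1}/\log^3 N)$ precisely when all $c_i < 569/498$, and assemble the asymptotic \eqref{eq:PS3}. The numerology in step (iv) is the delicate part, but it is a finite optimization once the exponential-sum bound is in hand.
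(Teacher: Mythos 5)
Your overall strategy is sound but it is not the route the paper takes, and the difference matters for how much work is actually left to do. The paper's proof of Theorem~\ref{thm:3ps} is deliberately modular: it proves the Balog--Friedlander condition \eqref{eq:BF} for $1<c<\frac{569}{498}$ (Theorem~\ref{thm:wb}, via Corollary~\ref{cor:eps}), and then invokes Lemma~\ref{lem:SDP} (Corollary~3.2 of \cite{SDP}), which states that whenever $c_1,c_2,c_3\in(1,\frac65)$ satisfy the (weak) Balog--Friedlander condition, the asymptotic \eqref{eq:PS3} follows. Since $\frac{569}{498}<\frac65$, the theorem is immediate; no circle-method analysis is performed in this paper at all. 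You instead propose to rerun the full Balog--Friedlander/Kumchev machinery: Vaaler expansion of the three indicator functions, major and minor arcs, Type~I/II decomposition, and a final ``finite optimization.'' That route can in principle be made to work --- it is how \cite{BaFr} and \cite{Kumc} proceeded --- but it reproves exactly the reduction that \cite{SDP} has already packaged, and it leaves the genuinely delicate part (the major-arc extraction with the sawtooth weights present, the uniformity in $\alpha$, and the verification that the total error is $o(N^{\gamma_1+\gamma_2+\gamma_3-1}/\log^3N)$) as an unexecuted step (iv). The paper's approach buys a two-line deduction at the cost of depending on \cite{SDP}; yours is self-contained in spirit but substantially longer if written out.

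One point to correct: the Balog--Friedlander condition is not a constraint ``on the linear form $\alpha mn$'' within the Type~I analysis, as you describe it. It is the global statement \eqref{eq:BF} that the weighted exponential sum over Piatetski-Shapiro primes agrees with the ordinary prime exponential sum up to $O(N^{1-\eps})$, uniformly in $\alpha$; this single estimate is what feeds the circle method, and it is exactly what Theorem~\ref{thm:wb} (hence Corollary~\ref{cor:eps} and the new Type~I bound of Lemma~\ref{lem:type1}) delivers. If you reorganize your argument so that steps (ii)--(iv) are replaced by ``establish \eqref{eq:BF} and cite \cite{SDP},'' you recover the paper's proof.
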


To prove Theorem \ref{thm:3ps}, the key part is the Balog-Friedlander condition due to Sun, Du and Pan. 

\begin{definition} [Balog-Friedlander condition]
\label{def:BF}
A real number $c \in (1,2)$ satisfies the Balog-Friedlander condition if for every sufficiently large $N$, it follows that
\begin{equation}
\label{eq:BF}
\sum_{\substack{p \le N \\ p \in \mathcal{N}^{(c)}}} c p^{1-\frac{1}{c}}\log p \cdot \e(\alpha p) = \sum_{p \le N} \log p\cdot \e(\alpha p) + O ( N^{1-\eps} ),
\end{equation}
uniformly for $\alpha$. 
\end{definition}

Balog and Friedlander proved that the condition holds for $1 < c < \frac{9}{8} = 1.125$ while Kumchev improved it to $1 < c < \frac{73}{64} = 1.1406...$. We obtain the following theorem.

\begin{theorem}
\label{thm:wb}
For $1 < c < \frac{569}{498} = 1.1425\dots$, the Balog-Friedlander condition holds. 
\end{theorem}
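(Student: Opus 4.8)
The plan is to carry out the classical reduction of a Balog--Friedlander statement to bounds for exponential sums over primes, and then to feed the resulting bilinear sums into the new estimate that is the main object of this paper.

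\emph{Step 1: the floor identity and the main term.} Since the interval $[p^{\gamma},(p+1)^{\gamma})$ has length less than $1$ it contains at most one integer, so the indicator of the event $p\in\cN^{(c)}$ equals
\[
\lfloor -p^{\gamma}\rfloor-\lfloor -(p+1)^{\gamma}\rfloor
=(p+1)^{\gamma}-p^{\gamma}+\psi\big(-(p+1)^{\gamma}\big)-\psi\big(-p^{\gamma}\big),
\]
where $\psi(t)=t-\lfloor t\rfloor-\tfrac12$. Substituting this into the left side of \eqref{eq:BF} and using $c\gamma=1$ together with Taylor's theorem (so that $c\,p^{1-\gamma}\big((p+1)^{\gamma}-p^{\gamma}\big)=1+O(1/p)$), the contribution of the term $(p+1)^{\gamma}-p^{\gamma}$ is $\sum_{p\le N}\log p\cdot\e(\alpha p)+O(\log N)$, which is exactly the required main term. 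The two remaining sawtooth terms are, after passing from $\log p$ to $\Lambda(n)$ at negligible cost, removing the smooth factor $c\,n^{1-\gamma}$ by partial summation, and decomposing dyadically (short dyadic ranges being handled trivially), controlled once one shows
\[
\sum_{n\sim P}\Lambda(n)\,\e(\alpha n)\,\psi\big(-(n+u)^{\gamma}\big)\ll P^{\gamma-\eps}
\qquad(P\le N),
\]
uniformly in $\alpha$ and in $u\in\{0,1\}$.

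\emph{Step 2: Vaaler's expansion.} Next I would replace $\psi$ by Vaaler's approximating trigonometric polynomial of degree $K$, where $K$ is a parameter chosen slightly larger than $P^{1-\gamma}$. This turns the last sum into
\[
\sum_{1\le|k|\le K}a_{k}\sum_{n\sim P}\Lambda(n)\,\e\big(\alpha n-k(n+u)^{\gamma}\big)
\;+\;O\!\left(\frac{P}{K}+\frac1K\sum_{1\le k\le K}\Big|\sum_{n\sim P}\Lambda(n)\,\e\big(k(n+u)^{\gamma}\big)\Big|\right),
\]
with $a_{k}\ll 1/|k|$; the error term $P/K$ is admissible by the choice of $K$, and the remaining exponential sums are of the same kind. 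The crucial point, which the sum over $h$ in the exponential sum of the abstract is designed to exploit, is to keep the averaging over $k$ (with the weights $1/|k|$ and $1/K$) rather than estimating each inner sum separately.

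\emph{Step 3: combinatorial identity and the bilinear bound.} Applying the Heath--Brown identity to $\Lambda$ on $[P,2P]$ breaks each sum $\sum_{n\sim P}\Lambda(n)\,\e(\alpha n-k(n+u)^{\gamma})$ into $O(\log^{A}P)$ Type~I sums $\sum_{m\sim M}a_{m}\sum_{n\sim N}\e(\alpha mn-k(mn+u)^{\gamma})$ with $M$ below a suitable power of $P$, and Type~II sums $\sum_{m\sim M}\sum_{n\sim N}a_{m}b_{n}\,\e(\alpha mn-k(mn+u)^{\gamma})$ with $M$ in an intermediate range, where $MN\asymp P$ and $a_{m},b_{n}\ll P^{\eps}$. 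Reinstating the sum over $k$ (now in the role of $h$, so that $H\asymp K\asymp(MN)^{1-\gamma}$), every piece acquires, after a dyadic splitting in $h$, exactly the shape
\[
\sum_{h\sim H}\delta_{h}\mathop{\sum_{m\sim M}\sum_{n\sim N}}_{mn\sim P}a_{m}b_{n}\,\e\big(\alpha mn+h(mn+u)^{\gamma}\big),
\]
which is exactly the exponential sum bounded by the main estimate of this paper. The Type~II sums can be handled as in Kumchev~\cite{Kumc}, by a Cauchy--Schwarz step followed by van~der~Corput differencing and an exponent-pair bound for the resulting one-dimensional sum; since $\alpha$ affects only the first derivative of the phase, both these bounds and the new estimate itself are uniform in $\alpha$. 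The improvement over the previous range $c<\tfrac{73}{64}$ comes entirely from the new treatment of the Type~I sums: instead of estimating the sum over the free variable $n$ trivially after fixing $m$ and $h$, one exploits the length of the $n$-sum and the extra variable $h$ jointly, which yields the bound $\ll P^{\gamma-\eps}$ over a wider range of the ratios between $H$, $M$ and $N$.

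\emph{Step 4: bookkeeping.} Finally one collects the Type~I and Type~II bounds, inserts the admissible ranges of $M$ furnished by the Heath--Brown identity, and optimises $K$; the union of the resulting constraints is $\gamma>498/569$, that is $c<569/498$, and tracing the numerology back through Steps~1--3 produces exactly this threshold. I expect the genuine obstacle to lie in the new Type~I estimate of Step~3 --- making it valid over a range of $(H,M,N)$ wide enough to pass $73/64$ --- while within the present deduction the delicate point is the parameter optimisation of Step~4 that pins the exponent down at $569/498$.
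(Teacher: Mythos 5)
Your overall architecture is the paper's: the floor identity isolates the main term, Vaaler's polynomial converts the sawtooth contribution into exponential sums averaged over the frequency $h$, and the Heath--Brown identity reduces everything to the bilinear sums whose Type~I part carries the novelty; this is exactly how Theorem~\ref{thm:wb} is deduced from Corollary~\ref{cor:eps} in Section~5. However, Step~1 contains a genuine gap that the rest of your argument cannot repair. You split $\psi(-(n+1)^{\gamma})-\psi(-n^{\gamma})$ into two separate sums and set the target
$\sum_{n\sim P}\Lambda(n)\e(\alpha n)\psi(-(n+u)^{\gamma})\ll P^{\gamma-\eps}$ for each $u\in\{0,1\}$. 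After Vaaler's expansion this requires, for every dyadic block $k\sim H$ with $1\le H\le K\asymp P^{1-\gamma}$, the bound $H^{-1}\sum_{k\sim H}\bigl|\sum_{n\sim P}\Lambda(n)\e(\alpha n-k(n+u)^{\gamma})\bigr|\ll P^{\gamma-\eps}$, i.e.\ $\sum_{k\sim H}|\cdots|\ll HP^{\gamma-\eps}$. The only input available is Theorem~\ref{thm:eps}/Corollary~\ref{cor:eps}, which gives $\ll P^{1-\eps}$ for the whole block; this matches $HP^{\gamma-\eps}$ only when $H\gg P^{1-\gamma}$, so every block with $k=o(P^{1-\gamma})$ --- in particular $k=1$ --- falls short by the factor $P^{1-\gamma}/H$. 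Your remark in Step~3 that one may take $H\asymp K$ silently assumes only the top block matters, which is false in this setup.

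The paper avoids this precisely by never separating the two sawtooth terms: after Vaaler one keeps the difference and writes
\begin{equation*}
\e\big(h(n+1)^{\gamma}\big)-\e\big(hn^{\gamma}\big)=2\pi i\gamma h\int_{0}^{1}(n+u)^{\gamma-1}\e\big(h(n+u)^{\gamma}\big)\dd u,
\end{equation*}
which supplies the extra factor $|h|\,n^{\gamma-1}\asymp H P^{\gamma-1}$. This factor cancels the weight $a_h\ll 1/|h|$ and the smooth factor $n^{1-\gamma}$ simultaneously, reducing the task for \emph{every} dyadic block to $\sum_{h\sim H}\bigl|\sum_{n\sim P}\Lambda(n)\e(\alpha n+h(n+u)^{\gamma})\bigr|\ll P^{1-\eps}$, uniformly in $u\in[0,1]$ (note $u$ must range over the whole interval, not just $\{0,1\}$, because it is the integration variable above). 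That is exactly the statement of Corollary~\ref{cor:eps}. Your Steps~2--4 are then essentially the paper's Section~4; the remaining discrepancies (handling Type~II by Kumchev's method rather than by Proposition~2 of Balog--Friedlander, which the paper imports as Lemma~\ref{lem:HB1}) are cosmetic.
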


We also provide a more general form of this theorem in Section \ref{sec:8}. 


\subsection{Piatetski-Shapiro primes in arithmetic progressions}
Another natural topic related to primes is the study of primes in arithmetic progressions. For any integers $a$ and $q$ with $(a,q)=1$, we define the counting function
\begin{equation*}
\pi^{(c)} (x; q, a) = \#\big\{ p \leqslant x: p \equiv a \pmod q, \, p \in \mathcal{N}^{(c)} \big\}
\end{equation*}
in terms of the more familiar function
\begin{equation*}
\pi(x; q, a) = \#\big\{p \leqslant x: p \equiv a \pmod q \big\}.
\end{equation*}
Baker, Banks, Br\"{u}dern, Shparlinski and Weingartner \cite{BBBSW} proved an asymptotic formula and a bound of error term to $\pi^{(c)} (x; q, a)$. We list all the investigations of the admissble range of $c$ in the following table. 

\begin{center}\label{tab:ap}
	\begin{tabular}{||c|c||}
		\hline
		\vphantom{\Big|} Authors  &  The range of $c$ \\ 
		\hline 
		Baker et al. \cite{BBBSW} & $1 < c < \frac{18}{17} = 1.0588\dots $  \\ 
		Guo \cite{Guo1} & $1 < c < \frac{14}{13} = 1.0769\dots $  \\ 
		Guo, Li and Zhang \cite{GLZ2} & $1 < c < \frac{12}{11} = 1.0909\dots$  \\ [0.03in]
		\hline
	\end{tabular}
\end{center}

In this paper, we continue improving the range of $c$ and establish the following result. 

\begin{theorem}\label{thm:AP}
Let $a$ and $q$ be coprime integers. For $1 < c < \frac{569}{498} = 1.1425\dots$, we have
$$
\pi^{(c)}(x;q,a) = \gamma x^{\gamma-1} \pi(x;q,a)+\gamma(1-\gamma)\int_2^xu^{\gamma-2}\pi(u;q,a)\mathrm{d}u+O(x^{\gamma - \eps}),
$$
where the implied constant depends on $\gamma$ and $\eps$. 
\end{theorem}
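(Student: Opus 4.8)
The plan is to split $\pi^{(c)}(x;q,a)$ into an expected main term plus an oscillatory remainder, and to show the remainder is $O(x^{\gamma-\eps})$ by reducing it, via Vaaler's polynomial, additive characters modulo $q$, and a combinatorial identity for $\Lambda$, to the exponential sum that is the main technical result of this paper. For the first step I would use that, since $\gamma<1$, the interval $[m^{\gamma},(m+1)^{\gamma})$ has length $<1$ and so contains at most one integer; with $\psi(t)=t-\fl{t}-\tfrac12$ this gives the exact identity
\begin{equation*}
\ind{m\in\cN^{(c)}}=\fl{-m^{\gamma}}-\fl{-(m+1)^{\gamma}}=(m+1)^{\gamma}-m^{\gamma}+\psi\big(-(m+1)^{\gamma}\big)-\psi\big(-m^{\gamma}\big).
\end{equation*}
Summing over primes $p\le x$ with $p\equiv a\,(q)$, the contribution of $(p+1)^{\gamma}-p^{\gamma}=\gamma p^{\gamma-1}+O(p^{\gamma-2})$ is evaluated by Abel summation against $\pi(t;q,a)$, and because $\sum_{n\ge2}n^{\gamma-2}$ converges this contribution equals
\begin{equation*}
\gamma x^{\gamma-1}\pi(x;q,a)+\gamma(1-\gamma)\int_2^x u^{\gamma-2}\pi(u;q,a)\dd u+O(1),
\end{equation*}
which is precisely the claimed main term. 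Hence it remains to prove that
\begin{equation*}
R(x):=\sum_{\substack{p\le x\\ p\equiv a\,(q)}}\Big(\psi\big(-(p+1)^{\gamma}\big)-\psi\big(-p^{\gamma}\big)\Big)\ll x^{\gamma-\eps}.
\end{equation*}

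Next I would linearise $R(x)$. By partial summation — the prime powers contribute $O(\sqrt{x}\log x)$, harmless since $\gamma>\tfrac12$ — it is enough to bound, uniformly for $X\le x$,
\begin{equation*}
T(X):=\sum_{\substack{n\le X\\ n\equiv a\,(q)}}\Lambda(n)\Big(\psi\big(-(n+1)^{\gamma}\big)-\psi\big(-n^{\gamma}\big)\Big)
\end{equation*}
by $X^{\gamma-\eps}$. Fix a truncation level $H$ with $X^{1-\gamma+\eps}\le H\le X^{1-\gamma+\delta}$ for a small $\delta=\delta(c)>\eps$; it is exactly the availability of such an $H$ that will force $c<\tfrac{569}{498}$, equivalently $1-\gamma<\tfrac{71}{569}$. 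Applying Vaaler's approximation $\psi(t)=\sum_{1\le|h|\le H}c_h\e(ht)+O\big(H^{-1}\sum_{|h|\le H}(1-|h|/(H+1))\e(ht)\big)$ with $|c_h|\le(2\pi|h|)^{-1}$, and detecting the congruence by $\frac1q\sum_{r\bmod q}\e\big(r(n-a)/q\big)$, one bounds $|T(X)|$ by a constant times
\begin{equation*}
\sum_{u\in\{0,1\}}\ \sum_{1\le|h|\le H}\frac1{|h|}\ \max_{\alpha\in\QQ}\ \bigg|\sum_{n\le X}\Lambda(n)\,\e\big(\alpha n+h(n+u)^{\gamma}\big)\bigg|,
\end{equation*}
the $h=0$ part of the Vaaler error contributing only $O\big(X/(H\varphi(q))\big)=O(X^{\gamma-\eps})$ by Brun--Titchmarsh and the lower bound on $H$, while the $h\ne0$ error terms have the same shape as the displayed sum and are absorbed.

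It then remains to bound the displayed sum, and here I would split $h$ into dyadic ranges $h\sim H'$, turning the weights $|h|^{-1}$ into coefficients $\delta_h$ with $|\delta_h|\le 1/H'$, and decompose $\Lambda$ on $(\sqrt{X},X]$ by the Heath--Brown identity (a Vaughan-type decomposition also works). This writes each $\sum_{n\le X}\Lambda(n)\e(\alpha n+h(n+u)^{\gamma})$, up to an admissible short initial segment, as boundedly many Type~I sums $\sum_{m\sim M}a_m\sum_{d\sim D}\e\big(\alpha md+h(md+u)^{\gamma}\big)$ with $M$ in a short range, $MD\sim X$ and $a_m$ divisor-bounded, and Type~II sums $\sum_{m\sim M}\sum_{n\sim N}a_mb_n\,\e\big(\alpha mn+h(mn+u)^{\gamma}\big)$ with $M,N$ in medium ranges, $MN\sim X$ and $a_m,b_n$ divisor-bounded. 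Restoring $\sum_{h\sim H'}\delta_h$ in front of each piece produces exactly the exponential sums studied in this paper; the main exponential sum estimate handles the Type~II pieces, its new treatment of the Type~I sum handles the Type~I pieces, and both give a power saving over the trivial bound for all the ranges arising, uniformly in the rational frequency $\alpha=r/q$. Summing over the $O(\log X)$ dyadic blocks in $h$ and the finitely many pieces of the identity gives $T(X)\ll X^{\gamma-\eps}$, hence $R(x)\ll x^{\gamma-\eps}$, which together with the first step proves the theorem (uniformity in $a$, and in $q$ up to $q\le x^{\eta}$, is propagated throughout).

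The genuinely hard input is the exponential sum estimate invoked above, which is the core of the paper and proved separately; I expect the only delicate point in the present deduction to be the range verification — checking that, for $H$ in the window $X^{1-\gamma+\eps}\le H\le X^{1-\gamma+\delta}$, every Type~I and Type~II range delivered by the Heath--Brown identity falls inside the region where that estimate is non-trivial, and that this can be arranged precisely up to the threshold $c=\tfrac{569}{498}$, with all bounds uniform in $\alpha$ and (in the admissible range) in $q$.
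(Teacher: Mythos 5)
Your overall strategy---the splitting of $\pi^{(c)}(x;q,a)$ into a main term plus a $\psi$-difference via Lemma~\ref{lem:PS}, Abel summation for the main term, Vaaler's polynomial, detection of the congruence by additive characters modulo $q$, and reduction to the exponential sum $\sum_{h}\big|\sum_{n}\Lambda(n)\e(\alpha n+h(n+u)^{\gamma})\big|$ treated by Corollary~\ref{cor:eps}---is exactly the paper's route in Section~\ref{sec:6}. However, there is a genuine gap in your linearisation of the remainder. The quantity you display,
\[
\sum_{u\in\{0,1\}}\ \sum_{1\le|h|\le H}\frac{1}{|h|}\ \max_{\alpha}\ \bigg|\sum_{n\le X}\Lambda(n)\,\e\big(\alpha n+h(n+u)^{\gamma}\big)\bigg|,
\]
cannot be $\ll X^{\gamma-\eps}$. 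Taking $u\in\{0,1\}$ means you have applied Vaaler's approximation to $\psi(-(n+1)^{\gamma})$ and $\psi(-n^{\gamma})$ separately and thrown away the cancellation between the two terms. The exponential-sum input only gives $\sum_{h\sim H'}|\cdots|\ll X^{1-\eps}$, so after inserting the weights $|h|^{-1}$ the dyadic block $h\asymp 1$ alone contributes $X^{1-\eps}$, which exceeds your target $X^{\gamma-\eps}$ and indeed exceeds the main term, which is only of size $x^{\gamma}/(\varphi(q)\log x)$. As written, the deduction proves nothing beyond $\Sigma_2(x)\ll x^{1-\eps}$.

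The missing ingredient is the identity
\[
\e\big(h(n+1)^{\gamma}\big)-\e\big(hn^{\gamma}\big)=2\pi i\gamma h\int_0^1(n+u)^{\gamma-1}\e\big(h(n+u)^{\gamma}\big)\,\mathrm{d}u ,
\]
applied to the difference of the two Vaaler main terms: the factor $h$ cancels the weight $a_h\ll|h|^{-1}$, and the factor $(n+u)^{\gamma-1}\ll X^{\gamma-1}$ supplies precisely the saving you are missing, after which a partial summation in $n$ gives $X^{\gamma-1}\sum_{0<|h|\le H}\sup_{0\le u\le 1}|\cdots|\ll X^{\gamma-1}\cdot X^{1-\eps}=X^{\gamma-\eps}$. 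This is also why Theorem~\ref{thm:eps} and Corollary~\ref{cor:eps} are stated for a continuous parameter $0\le u\le 1$ rather than $u\in\{0,1\}$, and it is the origin of the prefactor $x^{\gamma-1}$ in front of the $h$-sum in the paper's bound for $\mathcal{S}_1$. (For the $b_h$-error terms of Vaaler the weight is $1/H$ rather than $1/|h|$, so there the crude separate treatment of the two sawtooth values is harmless, as is your $h=0$ term.) With this repair the rest of your argument---the congruence detection, the dyadic splitting in $h$, and the appeal to the Type~I/Type~II machinery already packaged in Corollary~\ref{cor:eps}---goes through and coincides with the paper's proof.
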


\subsection{Piatetski-Shapiro primes in a Beatty sequence}
For fixed real numbers $\alpha$ and $\beta$, the associated non--homogeneous Beatty sequence is the sequence of integers defined by
\[
\cB_{\alpha,\beta} = \(\fl{\alpha n+\beta}\)_{n=1}^\infty,
\]
which are also called generalized arithmetic progressions. Guo, Li and Zhang \cite{GLZ} proved that there are infinitely many Piatetski-Shapiro primes in the Beatty sequence with some certain restrictions for $1 < c < \frac{12}{11}$.  With a new estimation of the related exponential sums, we extend the admissible range of $\gamma$. 

\begin{theorem}
\label{thm:3}
Suppose $\beta\in \mathbb{R}$. Let $\alpha > 1$ be irrational and of finite type.
For $1 < c < \frac{569}{498} = 1.1425\dots$, there are infinitely many primes in the intersection of Beatty sequence $\cB_{\alpha,\beta}$ and the Piatetski--Shapiro sequence $\cN^{(c)}$.
Moreover, the counting function
$$
\pi^{(c)}_{\alpha,\beta} (x) = \# \big\{ \text{\rm prime~}p \le x: p \in \cB_{\alpha,\beta} \cap \cN^{(c)} \big\}
$$
satisfies
$$
\pi^{(c)}_{\alpha,\beta} (x)  = \frac{x^{\gamma}}{{\alpha} \log x} +O\bigg(\frac{x^{\gamma}}{\log^2x}\bigg),
$$
where the implied constant depends only on $\alpha$ and $\gamma$.
\end{theorem}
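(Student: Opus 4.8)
The plan is to adapt the argument of Guo, Li and Zhang~\cite{GLZ} for primes in $\cB_{\alpha,\beta}\cap\cN^{(c)}$, feeding into it the sharper bound for $\sum_{h\sim H}\delta_h\sum_{mn\sim x}a_mb_n\,\e(\alpha mn+h(mn+u)^\gamma)$ established earlier in the paper. First I would encode the two defining conditions through the sawtooth function $\psi(t)=t-\lfloor t\rfloor-\tfrac12$: for all sufficiently large integers $m$,
$$
\ind{\cN^{(c)}}(m)=\big((m+1)^\gamma-m^\gamma\big)+\psi\!\big(-(m+1)^\gamma\big)-\psi\!\big(-m^\gamma\big),\qquad
\ind{\cB_{\alpha,\beta}}(m)=\tfrac1\alpha+\psi\!\Big(\tfrac{m-\beta}{\alpha}\Big)-\psi\!\Big(\tfrac{m+1-\beta}{\alpha}\Big).
$$
Discarding prime powers (an error $O(x^{\gamma-1/2})$) and passing to the von Mangoldt weighted quantity $\sum_{n\le x}\Lambda(n)\ind{\cN^{(c)}}(n)\ind{\cB_{\alpha,\beta}}(n)$, it remains to evaluate this sum and then recover $\pi^{(c)}_{\alpha,\beta}(x)$ by partial summation — the step that by itself contributes the admissible $O(x^\gamma/\log^2 x)$. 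Multiplying out the two sawtooth expansions splits the sum into four pieces.

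The principal-times-principal piece is $\tfrac1\alpha\sum_{n\le x}\Lambda(n)\big((n+1)^\gamma-n^\gamma\big)=\tfrac\gamma\alpha\sum_{n\le x}\Lambda(n)n^{\gamma-1}+O(x^{\gamma-1})$, which by the prime number theorem (and the partial summation above) produces the main term $\tfrac{x^\gamma}{\alpha\log x}$. The (main part of $\ind{\cN^{(c)}}$)~$\times$~(sawtooth part of $\ind{\cB_{\alpha,\beta}}$) piece reduces, after partial summation, to bounding $\sum_{n\le y}\Lambda(n)\psi\!\big(\tfrac{n+\lambda}{\alpha}\big)$ for $y\le x$; expanding $\psi$ by Vaaler's approximation and estimating $\sum_{n\le y}\Lambda(n)\e\!\big(\tfrac{hn}{\alpha}\big)$ by Vinogradov's method — where the hypothesis that $\alpha$ is of \emph{finite type} rules out anomalously good rational approximations to $h/\alpha$ — gives a power saving $\ll x^{\gamma-\delta}$. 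The (sawtooth part of $\ind{\cN^{(c)}}$)~$\times$~(main part of $\ind{\cB_{\alpha,\beta}}$) piece equals $\tfrac1\alpha\sum_{n\le x}\Lambda(n)\big(\psi(-(n+1)^\gamma)-\psi(-n^\gamma)\big)$, the classical Piatetski--Shapiro sum, i.e.\ the $\alpha=0$ specialisation of our exponential-sum bound, hence also $\ll x^{\gamma-\delta}$ in the range $1<c<\tfrac{569}{498}$.

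The genuinely bilinear piece $\sum_{n\le x}\Lambda(n)\big(\psi(-(n+1)^\gamma)-\psi(-n^\gamma)\big)\big(\psi(\tfrac{n-\beta}{\alpha})-\psi(\tfrac{n+1-\beta}{\alpha})\big)$ carries the weight of the proof. Expanding \emph{both} sawtooth factors by Vaaler's lemma truncated at heights $H_1,H_2$ (small powers of $x$), and handling the Vaaler remainders exactly as in the previous two paragraphs at a cost of $O(x/H_1)+O(x/H_2)$, reduces it to a linear combination
$$
\sum_{1\le|h_1|\le H_1}\ \sum_{1\le|h_2|\le H_2}\frac{\tilde a_{h_1}\tilde b_{h_2}}{h_1h_2}\sum_{n\le x}\Lambda(n)\,\e\!\Big(\tfrac{h_1}{\alpha}n+h_2(n+u)^\gamma\Big),\qquad u\in\{0,1\},
$$
with $u$ recording whether the Piatetski--Shapiro sawtooth was evaluated at $n^\gamma$ or $(n+1)^\gamma$, and unimodular $\tilde a_{h_1},\tilde b_{h_2}$ absorbing the shifts by $\beta$. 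A combinatorial identity for $\Lambda$ (Vaughan's, or Heath--Brown's for a finer split) now breaks the inner sum into $O(\log^{c_0}x)$ Type~I and Type~II bilinear sums of the shape $\sum_{m\sim M}\sum_{n\sim N}a_mb_n\,\e\!\big(\tfrac{h_1}{\alpha}mn+h_2(mn+u)^\gamma\big)$; collecting $h_2$ into dyadic ranges $h_2\sim H$ with weights $\delta_{h_2}\ll H^{-1}$, these are precisely the sums governed by our key estimate, its free real parameter being $h_1/\alpha$. The estimate saves a power of $x$ over the trivial bound for every $(M,N)$ the identity produces; summing over $h_1$ (the weights $1/|h_1|$ costing only a logarithm) and $h_2$, and optimising $H_1,H_2$, keeps this piece $\ll x^{\gamma-\delta}$.

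Assembling the four pieces gives $\pi^{(c)}_{\alpha,\beta}(x)=\tfrac{x^\gamma}{\alpha\log x}+O(x^\gamma/\log^2 x)$, which tends to infinity, so $\cB_{\alpha,\beta}\cap\cN^{(c)}$ contains infinitely many primes. I expect the one real difficulty to be organisational rather than conceptual: one must check that \emph{every} $(M,N)$ splitting produced by the combinatorial identity lands in the region where the new Type~I bound (together with the standard Type~II bounds) improves on the trivial estimate, uniformly in the free parameter $h_1/\alpha$, so that the final optimisation of $H_1,H_2$ actually delivers the exponent $\tfrac{569}{498}$ — the very point where the gain over \cite{GLZ} is secured, in the same range that underlies Theorems~\ref{thm:3ps}, \ref{thm:wb} and~\ref{thm:AP}.
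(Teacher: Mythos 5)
Your proposal takes essentially the same route as the paper: the paper simply invokes \cite[Section 4]{GLZ} for exactly this double sawtooth/Vaaler reduction, arriving at the sum $\sum_{0<|h|\le x^{\eps}}|h|^{-1}\sum_{0<j\le x^{1-\gamma+\eps}}\big|\sum_{n}\Lambda(n)\e(jn^\gamma+\omega hn-\omega h\beta)\big|$ (your ``genuinely bilinear piece'') and then applies Corollary~\ref{cor:eps} with the free linear parameter $\omega h$, uniformly in $h$. The only imprecision is your $O(x/H_1)$ bound for the Beatty-side Vaaler remainder, which is too crude for $H_1=x^{\eps}$ as stated --- one must exploit that $\psi(-(n+1)^\gamma)-\psi(-n^\gamma)$ is $O(n^{\gamma-1})$ on average --- but this is a standard point handled in \cite{GLZ} and does not affect the argument.
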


\subsection{A key result to exponential sums}
The improvements of our theorems related to Piatetski-Shapiro sequences rely on the following bound. The definition of exponent pair can be found on Page 30 of \cite{GraKol}.

\begin{theorem}
\label{thm:eps}
Let $(k,l)$ be an exponent pair such that
$$
4k-2l+1>0 
$$
and $\gamma$ subjected to
\begin{align*}
\max\biggl\{\frac{13}{15},  \frac{12k+10}{12k-2l+13} \biggr\} <\gamma<1.
\end{align*}
Assume further that $H\leqslant x^{1-\gamma + \eps}$ and $0\leqslant u \leqslant 1$. Then
\begin{align*}
	\sum_{h\sim H}\bigg| \sum_{n \sim x}\Lambda(n)\e\big(\alpha n + h(n+u)^{\gamma}\big) \bigg| \ll x^{1-\eps}.
\end{align*}
\end{theorem}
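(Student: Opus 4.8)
The plan is to follow the standard route for exponential sums over primes---decompose $\Lambda$ by a combinatorial identity, estimate the resulting bilinear sums, optimise---the one new ingredient being the treatment of the Type~I sum, which is where the improvement over Kumchev's range comes from. First I would apply Heath-Brown's identity of level~$3$ (Vaughan's identity also works, with a weaker outcome) to the inner sum, writing
$$
\sum_{n\sim x}\Lambda(n)\e\big(\alpha n+h(n+u)^\gamma\big)
$$
as a linear combination of $O(\log^{O(1)}x)$ sums in which $n$ is replaced by a product $m_1\cdots m_r\,n_1\cdots n_s$ with M\"obius weights on the $m_i$, a logarithmic weight on one $n_j$, and every $m_i\le x^{1/3}$. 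Collecting the variables into two dyadic blocks $m\sim M$, $n\sim N$ with $MN\asymp x$, each piece becomes a sum of the shape
$$
\sum_{h\sim H}\delta_h\mathop{\sum_{m\sim M}\sum_{n\sim N}}_{mn\sim x}a_m b_n\,\e\big(\alpha mn+h(mn+u)^\gamma\big),
$$
where the $\delta_h$ are the unimodular constants that remove the absolute values in the statement, $a_m,b_n\ll x^\eps$, and one is either in a \textbf{Type~I} case (one block consists only of variables carrying coefficient~$1$, so the sum over it of the pure exponential can be isolated) with $N$ in a range fixed by the identity, or in a \textbf{Type~II} case ($a_m,b_n$ arbitrary, $M$ and $N$ both bounded away from $1$ and from $x$). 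It then suffices to bound each piece by $x^{1-2\eps}$.

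For the Type~II sums, assuming without loss of generality that $n$ is the longer variable, I would fix $h$, apply Cauchy--Schwarz in $n$ to remove the coefficients $b_n$, and expand the square; the diagonal is acceptable and the off-diagonal reduces to
$$
\sum_{h\sim H}\ \sum_{\substack{m_1,m_2\sim M\\ m_1\ne m_2}}\Big|\sum_{n\sim N}\e\big(\alpha(m_1-m_2)n+h\big((m_1n+u)^\gamma-(m_2n+u)^\gamma\big)\big)\Big|.
$$
The inner sum in $n$ has second derivative of size $\asymp h|m_1-m_2|M^{\gamma-1}N^{\gamma-2}$---the linear term being invisible to it---so an exponent pair $(k,l)$ bounds it by $F^kN^{l-k}$ with $F\asymp h|m_1-m_2|M^{\gamma-1}N^\gamma$; summing over $m_1,m_2,h$ and using $H\le x^{1-\gamma+\eps}$ then yields the required saving on the range of $N$ left to Type~II, the hypothesis $4k-2l+1>0$ being one of the conditions needed for the exponent-pair estimates to be nontrivial there.

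The main obstacle is the Type~I sum, where $n$ runs freely, $M$ may be as large as $x^{1/3+\eps}$, and the earlier treatments (Balog--Friedlander, Kumchev) estimate $\sum_{n\sim N}\e(\alpha mn+h(mn+u)^\gamma)$ pointwise and then lose the sum over $h$ to the triangle inequality. The new point is to retain the average over $h$. I would first apply the van der Corput $B$-process (Poisson summation and stationary phase) to the $n$-sum, turning it into a short sum over a dual variable $\nu$ running over $\asymp Hm^\gamma N^{\gamma-1}$ values with a Legendre-transform phase whose main part is, up to constants, $h^{1/(1-\gamma)}m^{\gamma/(1-\gamma)}\nu^{-\gamma/(1-\gamma)}$---genuinely bilinear in $h$ against $(m,\nu)$. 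A double large sieve---or Cauchy--Schwarz in the pair followed by a lattice-point count---then extracts cancellation in $h$ that is entirely wasted when the triangle inequality is used there, while an exponent pair applied to the $\nu$-sum handles the remaining oscillation; this is what widens the admissible $N$-range and hence the value of $\gamma$. Balancing the outcome against the elementary estimates that cover the most unbalanced pieces, and feeding everything back through the Heath-Brown ranges, produces the two lower bounds on $\gamma$: the threshold $13/15$ records a sub-range where no exponent pair helps, and $(12k+10)/(12k-2l+13)$ records the optimum of the exponent-pair estimates. Inserting Bourgain's exponent pair $(k,l)=(\tfrac{13}{84},\tfrac{55}{84})$, for which $4k-2l+1=\tfrac{13}{42}>0$ and $(12k+10)/(12k-2l+13)=498/569>13/15$, gives the numerical range $\gamma>498/569$ used in the applications. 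Throughout, the hypothesis $H\le x^{1-\gamma+\eps}$ is what makes the dual-variable error terms and the Type~II diagonal negligible.
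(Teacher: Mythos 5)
Your overall skeleton (Heath--Brown identity, Type~I/Type~II dichotomy, exponent pairs, Bourgain's pair at the end) matches the paper, and your Type~II sketch is consistent with the bound the paper imports from Balog--Friedlander. But your treatment of the Type~I sum --- which is the whole point of the theorem --- has a genuine gap, and it sits exactly at the difficulty the paper is designed to overcome. You propose to apply the $B$-process directly to the $n$-sum with phase $\alpha mn+h(mn+u)^\gamma$ and assert that the resulting Legendre-transform phase is, up to constants, $h^{1/(1-\gamma)}m^{\gamma/(1-\gamma)}\nu^{-\gamma/(1-\gamma)}$. That computation silently drops the linear twist: the stationary-phase equation is $\alpha m+\gamma hm(mn+u)^{\gamma-1}=\nu$, so the dual phase involves powers of $\nu-\alpha m$, with $\alpha$ arbitrary and the shift depending on $m$. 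The dual sum is therefore not ``genuinely bilinear in $h$ against $(m,\nu)$'' in any form you can feed to a double large sieve or to an exponent pair, and the uniformity in $\alpha$ that the theorem requires is lost at this step. The subsequent ``double large sieve, or Cauchy--Schwarz plus a lattice-point count'' is not worked out at all, so there is no way to check that it recovers the stated thresholds.

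The paper's route is different precisely to neutralize $\alpha$: after the harmless replacement of $(mn+u)^\gamma$ by $(mn)^\gamma+\gamma u h(mn)^{\gamma-1}$, it applies Cauchy--Schwarz in $m$ and then the Weyl--van der Corput $A$-process in $n$ with shift $q\le Q=\lfloor x^{2(1-\gamma)+\eps}\rfloor+1$. The differencing turns $\alpha mn$ into $\alpha qm$, whose integer part is irrelevant, leaving the phase $f_3(m,n)=hm^\gamma\bigl((n+q)^\gamma-n^\gamma\bigr)+\theta m$ with $\theta=\{\alpha q\}$ independent of $m,n$. Only then is the $B$-process applied --- in $m$, not in $n$ --- and the exponent pair $(k,l)$ is applied to the resulting $n$-sum; this is where $4k-2l+1>0$ and the threshold $(12k+10)/(12k-2l+13)$ actually arise (you attribute $4k-2l+1>0$ to the Type~II analysis, which is not where it comes from; likewise $13/15$ comes from forcing $V=4x^{1/3}\le x^{5\gamma-4-\eps}$ in the Type~II range, not from ``a sub-range where no exponent pair helps''). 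Note also that the paper does \emph{not} retain cancellation in the $h$-average in Type~I: it applies the triangle inequality in $h$ at the outset, and the entire gain comes from the $A$-then-$B$ manoeuvre on $(m,n)$. So the one ingredient you flag as new is not the one the paper uses, and the one the paper uses --- differencing away the linear twist before any Poisson summation --- is absent from your argument.
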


Thanks to Bourgain's powerful exponent pair $(\frac{13}{84}+\eps, \frac{55}{84}+\eps)$ in Theorem~6 of \cite{Bour}, we obtain the following corollary.
\begin{corollary}
\label{cor:eps}
\begin{align*}
	\sum_{h\sim H}\bigg| \sum_{n \sim x}\Lambda(n)\e\big(\alpha n + h(n+u)^{\gamma}\big) \bigg| \ll x^{1-\eps}
\end{align*}
provided $\frac{498}{569}<\gamma < 1$,~ $H\leqslant x^{1-\gamma + \eps}$ and $0\leqslant u\leqslant 1$.
\end{corollary}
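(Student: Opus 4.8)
The plan is to obtain the corollary as an immediate specialization of Theorem~\ref{thm:eps} to Bourgain's exponent pair $(k,l)=\bigl(\tfrac{13}{84}+\eps,\tfrac{55}{84}+\eps\bigr)$ from Theorem~6 of~\cite{Bour}. Two hypotheses of Theorem~\ref{thm:eps} must be checked for this pair: the linear constraint $4k-2l+1>0$, and the shape of the admissible $\gamma$-window, which amounts to evaluating $\max\bigl\{\tfrac{13}{15},\ \tfrac{12k+10}{12k-2l+13}\bigr\}$ and recognizing it as $\tfrac{498}{569}$.

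First I would dispose of the linear constraint: with $k=\tfrac{13}{84}+\eps$ and $l=\tfrac{55}{84}+\eps$ one has $4k-2l+1=\tfrac{13}{42}+2\eps>0$, so this causes no difficulty. Next I would compute the threshold. Taking first $k=\tfrac{13}{84}$, $l=\tfrac{55}{84}$ and clearing denominators, $12k+10=\tfrac{996}{84}$ and $12k-2l+13=\tfrac{1138}{84}$, so that
\[
\frac{12k+10}{12k-2l+13}=\frac{996}{1138}=\frac{498}{569}=0.8752\ldots,
\]
which exceeds $\tfrac{13}{15}=0.8666\ldots$; hence the maximum in Theorem~\ref{thm:eps} equals $\tfrac{498}{569}$, and the theorem applies for $\tfrac{498}{569}<\gamma<1$. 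Restoring the genuine pair $(k,l)=\bigl(\tfrac{13}{84}+\eps,\tfrac{55}{84}+\eps\bigr)$ perturbs the threshold by only $O(\eps)$; since $\gamma>\tfrac{498}{569}$ is a strict inequality and $\eps$ may be taken as small as we please, every such $\gamma$ still lies inside the window of Theorem~\ref{thm:eps} for a suitable $\eps>0$, and the bound $\sum_{h\sim H}\bigl|\sum_{n\sim x}\Lambda(n)\e(\alpha n+h(n+u)^{\gamma})\bigr|\ll x^{1-\eps}$ follows for $H\leqslant x^{1-\gamma+\eps}$ and $0\leqslant u\leqslant1$.

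There is no genuine obstacle here; the entire analytic weight sits in Theorem~\ref{thm:eps}, and the corollary merely records that Bourgain's pair is the one that drives the Type~I threshold $\tfrac{12k+10}{12k-2l+13}$ down to $\tfrac{498}{569}$ among currently available exponent pairs. The only point deserving care is the comparison with the competing bound $\tfrac{13}{15}$: this comes from the Type~II analysis rather than the Type~I analysis, and it would become the binding constraint if a better Type~I exponent pair were available; with Bourgain's pair we remain above it, so $\tfrac{498}{569}$ is the final value and cannot at present be improved merely by choosing a different $(k,l)$.
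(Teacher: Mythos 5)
Your proposal is correct and is exactly the paper's route: the corollary is obtained by specializing Theorem~\ref{thm:eps} to Bourgain's exponent pair $(\tfrac{13}{84}+\eps,\tfrac{55}{84}+\eps)$, and your arithmetic ($4k-2l+1=\tfrac{13}{42}>0$ and $\tfrac{12k+10}{12k-2l+13}=\tfrac{996}{1138}=\tfrac{498}{569}>\tfrac{13}{15}$) checks out. The paper states this in one sentence without the verification, so your write-up simply supplies the details it omits.
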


\subsection{Other related topics}

We should mention that our Corollary \ref{cor:eps} is very useful while people are investigating combination problems related to Piatetski-Shapiro primes, Beatty primes, primes in arithmetic progressions and the ternary Goldbach problem. Indeed, we believe that following questions can also be proved with the same admissible range: 

\begin{itemize}
\item Piatetski-Shapiro primes in arithmetic progressions in a Beatty sequence;
\item The ternary Goldbach problem with primes in the intersection of a Piatetski-Shapiro sequence and Beatty sequences;
\item The ternary Goldbach problem with Piatetski-Shapiro primes in arithmetic progressions;
\item and so on.
\end{itemize}

However, to make the article short, we will not discuss these problems. 

\section{Preliminary}

\subsection{Notations}

We denote by $\fl{t}$ and $\{t\}$ the integer part and the fractional part of $t$, respectively. As is customary, we put $\e(t)= e^{2\pi it}$. We make considerable use of the sawtooth function defined by
$$
\psi(t) = t-\fl{t}-\frac{1}{2}=\{t\}-\frac{1}{2}\qquad(t\in\RR).
$$

The letter $p$ always denotes a prime. For the Piatetski-Shapiro sequence $(\fl{n^c})_{n=1}^\infty$, we denote $\gamma = c^{-1}$. $\delta$ is a parameter satisfies $0\leqslant \delta \leqslant 1-\gamma$. $\eps$ is always a sufficiently small positive number. $x\sim X$ means that $x$ runs through a subinterval of $(X,2X]$, which endpoints are not necessary the same in different equations and may depend on outer summation variables.
	
Throughout the paper, implied constants in symbols $O$, $\ll$ and $\gg$ may depend (where obvious) on the parameters $c, \eps$ but are absolute otherwise. For given functions $F$ and $G$, the notations $F\ll G$, $G\gg F$ and $F=O(G)$ are all equivalent to the statement that the inequality $|F|\le C|G|$ holds with some constant $C>0$. $F \asymp G$ means that $F \ll G \ll F$.

\subsection{Technical lemmas}

We need the following approximation of Vaaler \cite{Vaal}.

\begin{lemma}
\label{lem:Vaaler}
For any $H\ge 1$ there are numbers $a_h,b_h$ such that
$$
\bigg|\psi(t)-\sum_{0<|h|\le H}a_h\,\e(th)\bigg| \le\sum_{|h|\le H}b_h\,\e(th),\qquad a_h\ll\frac{1}{|h|}\,,\qquad b_h\ll\frac{1}{H}\,.
$$
\end{lemma}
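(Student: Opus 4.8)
The plan is to follow Vaaler's construction via the Beurling--Selberg extremal functions, reducing the lemma to producing a trigonometric polynomial that approximates $\psi$ together with a \emph{nonnegative} trigonometric polynomial that dominates the approximation error. The naive idea of truncating the Fourier expansion $\psi(t)=-\sum_{h\ne0}(2\pi i h)^{-1}\e(th)$ fails, because the partial sums exhibit the Gibbs phenomenon at the integers, where $\psi$ is discontinuous; hence no truncation controls $\bigl|\psi(t)-\sum a_h\e(th)\bigr|$ pointwise. The content of the lemma is exactly that one can nonetheless find an approximation whose error admits a one-sided majorant with small Fourier mass. We may assume $H$ is a positive integer, since replacing $H$ by $\fl{H}$ only affects the implied constants.

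First I would construct the Beurling extremal majorant $B(z)$ of $\operatorname{sgn}(x)$: the entire function of exponential type $2\pi$ satisfying $B(x)\ge\operatorname{sgn}(x)$ for all real $x$, with $\int_{\RR}\bigl(B(x)-\operatorname{sgn}(x)\bigr)\,\mathrm dx$ minimal, equal to $1$, and with Fourier transform supported in $[-1,1]$. This is the analytic heart of the argument and is where I expect the main work to lie: the function is built explicitly from $\bigl(\tfrac{\sin\pi z}{\pi}\bigr)^2$ times a series in $(z-n)^{-2}$, and one must verify both the sign condition $B\ge\operatorname{sgn}$ and the exact value of the integral by interpolation at the integers. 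This is the genuinely delicate step; everything downstream is bookkeeping.

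Next I would pass from $B$ to a majorant and a minorant for the \emph{periodic} sawtooth. Rescaling by $H+1$ and periodizing by Poisson summation converts the entire functions of exponential type into trigonometric polynomials of degree at most $H$: one obtains $\psi^{+}(t)$ and $\psi^{-}(t)$ with $\psi^{-}(t)\le\psi(t)\le\psi^{+}(t)$ for every $t$, each of the form $\sum_{|h|\le H}(\cdots)\e(th)$. I would then take the approximation to be the average $\tfrac12\bigl(\psi^{+}+\psi^{-}\bigr)$ and define $a_h$ as its Fourier coefficients. Since $\psi^{-}\le\psi\le\psi^{+}$, the error $\psi-\tfrac12(\psi^{+}+\psi^{-})$ lies in $\pm\tfrac12\bigl(\psi^{+}-\psi^{-}\bigr)$, so it is dominated by the nonnegative trigonometric polynomial $\tfrac12\bigl(\psi^{+}-\psi^{-}\bigr)$. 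A direct computation identifies this difference, up to the constant $\tfrac1{2(H+1)}$, with the Fej\'er kernel $\sum_{|h|\le H}\bigl(1-|h|/(H+1)\bigr)\e(th)$, which is nonnegative and has coefficients $b_h=\tfrac1{2(H+1)}\bigl(1-|h|/(H+1)\bigr)$.

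Finally I would read off the coefficient bounds. From $0\le b_h\le\tfrac1{2(H+1)}$ we get $b_h\ll 1/H$, and the Fej\'er-kernel identity gives the pointwise inequality $\bigl|\psi(t)-\sum_{0<|h|\le H}a_h\e(th)\bigr|\le\tfrac12(\psi^{+}-\psi^{-})=\sum_{|h|\le H}b_h\e(th)$ demanded by the statement. For the $a_h$, comparing the Fourier coefficients of $\tfrac12(\psi^{+}+\psi^{-})$ with those of $\psi$ itself yields $a_h=-\tfrac1{2\pi i h}+O(1/H)$ for $1\le|h|\le H$, whence $a_h\ll 1/|h|$ (and $a_{-h}=\overline{a_h}$, so the approximating polynomial is real). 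The two points requiring care are the extremal property of $B$ in the second paragraph and the degree/sign bookkeeping in the periodization, which must simultaneously keep the frequencies in the range $|h|\le H$ and preserve the inequalities $\psi^{-}\le\psi\le\psi^{+}$.
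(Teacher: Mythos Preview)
Your sketch is a correct outline of Vaaler's original argument via the Beurling--Selberg extremal functions, and the steps you flag as delicate (the extremal property of $B$ and the degree/sign bookkeeping under periodization) are indeed where the work lies. However, the paper does not prove this lemma at all: it is simply quoted as a known result with a citation to Vaaler~\cite{Vaal}, so there is no ``paper's own proof'' to compare against. Your proposal therefore goes well beyond what the paper requires, supplying the substance behind the citation rather than diverging from an existing argument.
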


We use the following well-known lemma, which provides a characterization of the numbers that occur in the Piatetski-Shapiro sequence $\cN^{(c)}$; see \cite[Lemma 2.7]{GLZ}.

\begin{lemma}
	\label{lem:PS}
	A natural number $m$ has the form $\fl{n^c}$ if and only if $\mathbf{1}^{(c)}(m) = 1$, where
	$\mathbf{1}^{(c)}(m) = \fl{-m^\gamma} - \fl{-(m+1)^\gamma}$.  Moreover,
	$$
	\mathbf{1}^{(c)}(m)=\gamma m^{\gamma-1}+ \psi(-(m+1)^\gamma) - \psi(-m^\gamma)+O(m^{\gamma-2}).
	$$
\end{lemma}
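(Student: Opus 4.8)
The plan is to prove both assertions by elementary manipulation of the floor and ceiling functions, using nothing beyond the monotonicity of $t\mapsto t^\gamma$ and a first-order Taylor expansion. First I would translate the condition $m=\fl{n^c}$ into a statement about integers lying in an interval. Since $\gamma=c^{-1}\in(0,1)$ and all quantities are positive, raising to the power $\gamma$ is monotone increasing, so
$$
m=\fl{n^c}\iff m\le n^c<m+1\iff m^\gamma\le n<(m+1)^\gamma .
$$
Thus $m$ is a Piatetski--Shapiro value exactly when the half--open interval $[m^\gamma,(m+1)^\gamma)$ contains at least one integer. Next I would count these integers: the number of integers in $[a,b)$ equals $\rf{b}-\rf{a}$, and using the identity $\fl{-t}=-\rf{t}$ this count becomes
$$
\rf{(m+1)^\gamma}-\rf{m^\gamma}=\fl{-m^\gamma}-\fl{-(m+1)^\gamma}=\mathbf{1}^{(c)}(m).
$$

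To upgrade ``at least one integer'' to ``exactly one'', I would bound the length of the interval. By the mean value theorem $(m+1)^\gamma-m^\gamma=\gamma\,\xi^{\gamma-1}$ for some $\xi\in(m,m+1)$, and since $\gamma<1$ and $\gamma-1<0$ we get $\gamma\,\xi^{\gamma-1}<\gamma\, m^{\gamma-1}\le\gamma<1$ for every $m\ge1$. Hence $[m^\gamma,(m+1)^\gamma)$ has length strictly less than $1$ and contains either no integer or exactly one; consequently $\mathbf{1}^{(c)}(m)\in\{0,1\}$, and it equals $1$ precisely when $m=\fl{n^c}$ for some $n$. This settles the characterization.

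For the asymptotic expansion I would start from the defining relation $\psi(t)=t-\fl{t}-\tfrac12$, i.e. $\fl{t}=t-\psi(t)-\tfrac12$, and apply it with $t=-m^\gamma$ and $t=-(m+1)^\gamma$. Substituting into $\mathbf{1}^{(c)}(m)=\fl{-m^\gamma}-\fl{-(m+1)^\gamma}$, the constants $-\tfrac12$ cancel and I obtain
$$
\mathbf{1}^{(c)}(m)=\big((m+1)^\gamma-m^\gamma\big)+\psi(-(m+1)^\gamma)-\psi(-m^\gamma).
$$
Finally I would expand $(m+1)^\gamma-m^\gamma=m^\gamma\big((1+1/m)^\gamma-1\big)=\gamma m^{\gamma-1}+O(m^{\gamma-2})$ via the binomial/Taylor series, which yields the stated formula.

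None of the steps is genuinely difficult; the only place demanding care is keeping the endpoint conventions consistent throughout, namely the half--open interval $[m^\gamma,(m+1)^\gamma)$, the correct direction of floor versus ceiling, and the sign in $\fl{-t}=-\rf{t}$, so that the count attached to the interval is exactly $\mathbf{1}^{(c)}(m)$ and not an off--by--one variant. I would also remark that the counting identity $\rf{b}-\rf{a}$ for $[a,b)$ remains valid in the boundary cases where $m^\gamma$ or $(m+1)^\gamma$ happens to be an integer, so no separate argument is required there.
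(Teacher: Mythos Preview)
Your argument is correct and is the standard elementary derivation of this well-known lemma. The paper itself does not supply a proof of Lemma~\ref{lem:PS}; it simply cites \cite[Lemma~2.7]{GLZ}, so there is no in-paper proof to compare against, but your treatment matches what one finds in the literature.
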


The following lemma is the famous Weyl-van der Corput inequality, also called the A-process; see \cite[Lemma 2.5]{GraKol}. 

\begin{lemma}
\label{lem:A}
Suppose $f(n)$ is a complex valued function and $I$ is an interval such that $f(n) = 0$ if $n \notin I$. If $H$ is a positive integer then
$$
|\sum_{n \in I} f(n)|^2 \le \frac{|I| + H}{H} \sum_{|h|< H} \big(1 - \frac{h}{H} \big) \sum_{n \in I} f(n) \overline{f(n-h)}.  
$$
\end{lemma}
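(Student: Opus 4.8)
The plan is to obtain the inequality from the Cauchy--Schwarz inequality after a shifting (averaging) manoeuvre, which is the classical derivation of the $A$-process. Since $f$ vanishes outside $I$, I will throughout treat $S \defeq \sum_{n\in I} f(n)$ as a sum over all of $\ZZ$; this lets me shift the summation index freely without worrying about boundary terms.

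First I would record the averaging identity. For each integer $m$ there are exactly $H$ pairs $(n,h)$ with $0\le h<H$ and $n+h=m$, so summing $f$ over all these shifts recovers $H$ copies of $S$:
$$
H S \;=\; \sum_{h=0}^{H-1}\sum_{n\in\ZZ} f(n+h) \;=\; \sum_{n\in\ZZ} g(n), \qquad g(n)\defeq\sum_{0\le h<H} f(n+h).
$$
The gain from this rewriting is that $g(n)$ can be nonzero only when some shift $n+h$ lands in $I$; the set of such $n$ is $\bigcup_{0\le h<H}(I-h)$, which is contained in an interval of at most $|I|+H$ integers. This cardinality is precisely the length factor I will feed into Cauchy--Schwarz.

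Next I would apply Cauchy--Schwarz to $HS=\sum_n g(n)$, using that $g$ is supported on at most $|I|+H$ integers, to obtain
$$
H^2\,|S|^2 \;\le\; (|I|+H)\sum_{n\in\ZZ} |g(n)|^2 .
$$
I would then expand $|g(n)|^2$ as a double sum over the two shifts and interchange the order of summation:
$$
\sum_{n} |g(n)|^2 \;=\; \sum_{h_1=0}^{H-1}\sum_{h_2=0}^{H-1}\sum_{n} f(n+h_1)\,\overline{f(n+h_2)} .
$$
Grouping the terms according to the difference $h=h_1-h_2$, for each $h$ with $|h|<H$ there are exactly $H-|h|$ admissible pairs $(h_1,h_2)$, and the substitution $n\mapsto n-h_2$ turns the innermost sum into $\sum_n f(n)\overline{f(n-h)}$. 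This gives
$$
\sum_{n} |g(n)|^2 \;=\; \sum_{|h|<H}\bigl(H-|h|\bigr)\sum_{n} f(n)\,\overline{f(n-h)} .
$$

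Combining the Cauchy--Schwarz bound with this last identity and dividing by $H^2$ yields exactly the asserted bound, the coefficient being $1-|h|/H$ (written $1-h/H$ on the symmetric range $|h|<H$; the $h$ and $-h$ contributions are complex conjugates, so the right-hand side is automatically real and nonnegative, as it must be in order to dominate $|S|^2$). The argument is wholly elementary, so I do not expect a genuine obstacle; the only delicate points are the two counting steps --- verifying that the support of $g$ has at most $|I|+H$ elements and that each difference $h$ occurs with multiplicity $H-|h|$ --- since these are precisely what produce the factors $(|I|+H)/H$ and $(1-|h|/H)$ in the final inequality.
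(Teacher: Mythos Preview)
Your proof is correct and is exactly the standard derivation of the Weyl--van der Corput inequality. The paper does not supply its own proof of this lemma; it merely cites \cite[Lemma~2.5]{GraKol}, whose argument is precisely the averaging-plus-Cauchy--Schwarz computation you have written out.
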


The following lemma is derived from the Poisson summation formula, known as the B-process; see \cite[Lemma 3.6]{GraKol}.

\begin{lemma}
\label{lem:B}
Suppose that $f$ has four continuous derivatives on $[a,b]$ and that $f''<0$ on this interval. Suppose further that $[a,b] \subset [N, 2N]$ and that $\alpha = f'(b)$ and $\beta = f'(a)$. Assume that there is some $F>0$ such that 
$$
f^{(2)}(x) \asymp FN^{-2}, f^{(3)}(x) \ll FN^{-3}, \text{~and~} f^{(4)}(x) \ll FN^{-4} 
$$
for $x$ in $[a,b]$. Let $x_{\nu}$ be defined by the relate $f'(x_\nu) = \nu$, and let $\phi(\nu) = -f(x_\nu) + \nu x_\nu$. Then
$$
\sum_{n \in I} \e\big(f(n)\big) = \sum_{\alpha \le \nu \le \beta} \frac{\e\big(-\phi(\nu) - 1/8\big)}{|f''(x_\nu)|^{1/2}} + O\big(\log(FN^{-1} + 2) + F^{-1/2} N\big). 
$$
\end{lemma}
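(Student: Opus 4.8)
The plan is to pass from the sum to a family of exponential integrals by Poisson summation, and then to evaluate each integral by the method of stationary phase. Writing $g$ for the indicator function of the integers in $I=[a,b]$ (or, to soften the endpoints, a suitable smooth majorant and minorant), the truncated Poisson summation formula yields
$$
\sum_{n \in I} \e\big(f(n)\big) = \sum_{\nu \in \ZZ} \int_a^b \e\big(f(t) - \nu t\big) \dd t + (\text{endpoint terms}),
$$
so the whole problem reduces to understanding the integrals $J(\nu) = \int_a^b \e(f(t) - \nu t)\dd t$. The phase $f(t)-\nu t$ has derivative $f'(t)-\nu$; since $f''<0$, the function $f'$ decreases monotonically from $\beta=f'(a)$ down to $\alpha=f'(b)$. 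Hence $J(\nu)$ possesses a stationary point $t=x_\nu$, defined by $f'(x_\nu)=\nu$, precisely when $\alpha\le\nu\le\beta$, and is otherwise non-stationary throughout $[a,b]$.

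First I would dispose of the non-stationary range $\nu\notin[\alpha,\beta]$. There $|f'(t)-\nu|\ge\operatorname{dist}(\nu,[\alpha,\beta])$ on $[a,b]$, so integration by parts (the first-derivative test, using monotonicity of $f''$ to control the total variation of $1/(f'-\nu)$) gives $J(\nu)\ll 1/\operatorname{dist}(\nu,[\alpha,\beta])$, with faster decay for large $|\nu|$. Summing this harmonic tail over $\nu$ contributes $O(\log(FN^{-1}+2))$, since the length of the stationary range is $\beta-\alpha\asymp |f''|N\asymp FN^{-1}$ by the mean value theorem together with $f''\asymp FN^{-2}$.

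Next, for each $\nu$ in the stationary range I would invoke the standard stationary-phase evaluation of a single exponential integral (van der Corput's B-type lemma, as in \cite{GraKol}), whose error is controlled by the hypotheses $f''\asymp FN^{-2}$, $f'''\ll FN^{-3}$ and $f^{(4)}\ll FN^{-4}$. This gives
$$
J(\nu) = \frac{\e\big(f(x_\nu) - \nu x_\nu - 1/8\big)}{|f''(x_\nu)|^{1/2}} + E(\nu),
$$
where the $-1/8$ reflects $f''<0$; since $\phi(\nu)=-f(x_\nu)+\nu x_\nu$, the main term is exactly $\e(-\phi(\nu)-1/8)/|f''(x_\nu)|^{1/2}$. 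Summing over $\alpha\le\nu\le\beta$ reproduces the claimed principal sum.

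The delicate part, and the main obstacle, is to show that every error contribution collapses into $O(\log(FN^{-1}+2)+F^{-1/2}N)$. The per-integral error $E(\nu)$ is largest when $x_\nu$ lies within the stationary-phase width $\asymp|f''|^{-1/2}\asymp F^{-1/2}N$ of an endpoint $a$ or $b$; only $O(1)$ values of $\nu$ are affected near each endpoint, and each such boundary correction has size comparable to a single main term, $\asymp|f''|^{-1/2}\asymp F^{-1/2}N$, which is precisely the second error term. One must then also absorb the endpoint and truncation terms left over from Poisson summation, and verify that the interior stationary-phase errors (governed by $f'''$ and $f^{(4)}$), summed over the $\asymp FN^{-1}+1$ relevant values of $\nu$, remain of size at most $F^{-1/2}N$. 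Assembling these estimates together with the harmonic sum from the non-stationary range yields the stated error term and completes the proof.
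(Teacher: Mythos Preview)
Your outline is correct and follows the standard approach (truncated Poisson summation, then stationary phase on each $J(\nu)$, with the non-stationary tail handled by the first-derivative test); this is exactly the argument in Graham--Kolesnik. Note, however, that the paper does not give a proof of this lemma at all: it simply quotes it as \cite[Lemma~3.6]{GraKol}, so there is nothing to compare your argument against beyond that reference.
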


We need the following lemma by van der Corput; see~\cite[Theorem ~2.2]{GraKol}.

\begin{lemma}
\label{lem:GK3}
Let $f$ be twice continuously differentiable on a subinterval $\cI$ of $(N,2N]$. Suppose that for some $\lambda>0$, the inequalities
$$
\lambda\ll|f''(t)|\ll\lambda\qquad(t\in\cI)
$$
hold, where the implied constants are independent of $f$ and $\lambda$. Then
$$
\sum_{n\in\cI}\e\big(f(n)\big)\ll N\lambda^{1/2}+\lambda^{-1/2}.
$$
\end{lemma}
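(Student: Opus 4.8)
The plan is the classical reduction of the second-derivative test to the first-derivative test. If $\lambda\ge 1$ the bound is trivial, since $\sum_{n\in\cI}\e(f(n))\ll|\cI|\le N\le N\lambda^{1/2}$, so I assume $0<\lambda\le 1$. Since $f''$ is continuous and $|f''|\asymp\lambda$ it has constant sign on $\cI$; replacing $f$ by $-f$ if necessary (which only conjugates the sum) I may take $f''>0$, so $f'$ is strictly increasing on $\cI$ and its image $f'(\cI)$ is an interval of length $\ll\lambda|\cI|\ll\lambda N$.

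Next I partition $\cI$ by the integer nearest to $f'$: for $\nu\in\ZZ$ put $\cI_\nu=\{n\in\cI:|f'(n)-\nu|\le\tfrac12\}$. Monotonicity of $f'$ makes each $\cI_\nu$ a subinterval, the $\cI_\nu$ cover $\cI$, and since $f'(\cI)$ has length $\ll\lambda N$ the number of nonempty $\cI_\nu$ is $\ll\lambda N+1$. Fixing $\nu$ and writing $g(t)=f(t)-\nu t$, I have $\e(f(n))=\e(g(n))$ for integers $n$, while on $\cI_\nu$ the function $g'$ is increasing with $|g'|\le\tfrac12$ and $g''=f''\asymp\lambda$. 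It therefore suffices to prove
\[
\sum_{n\in\cI_\nu}\e(g(n))\ll\lambda^{-1/2},
\]
because summing this over the $\ll\lambda N+1$ relevant $\nu$ gives $\sum_{n\in\cI}\e(f(n))\ll(\lambda N+1)\lambda^{-1/2}\ll N\lambda^{1/2}+\lambda^{-1/2}$, which is the claim.

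For the displayed bound I split $\cI_\nu$ at the zero of $g'$ (if one is present) into two subintervals on which $g'$ has constant sign, and handle each the same way; say $0<g'\le\tfrac12$ on a subinterval $\cJ$, so that $\|g'\|=g'$ there, where $\|\cdot\|$ denotes the distance to the nearest integer. On the part of $\cJ$ where $g'<\lambda^{1/2}$ I use the trivial estimate: as $g'$ increases at rate $\gg\lambda$ this part has length $\ll\lambda^{-1/2}$, hence (using $\lambda\le 1$) contributes $O(\lambda^{-1/2})$. On the remainder I decompose dyadically into the pieces where $g'\in[2^{-k-1},2^{-k})$ with $2^{-k}\ge\lambda^{1/2}$; on each such piece $g'$ is monotone with $\|g'\|\ge 2^{-k-1}$, so the Kuzmin--Landau first-derivative inequality (which I would invoke, or derive in two lines by Abel summation from $\big|\sum_{n\le M}\e(\beta n)\big|\le\|\beta\|^{-1}$) bounds the sum over that piece by $O(2^{k})$; since $2^{k}\ll\lambda^{-1/2}$ on every such piece, the geometric sum over $k$ is $O(\lambda^{-1/2})$. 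The piece of opposite sign is identical, so the displayed bound follows. The step to get right is this dyadic balancing: the cut-off $\lambda^{1/2}$ must be placed so that the trivial and first-derivative estimates match, and it is essential that the sum over $k$ is geometric rather than harmonic, so no logarithm intrudes; the rest is bookkeeping, absorbing the additive constants from the short pieces into $N\lambda^{1/2}+\lambda^{-1/2}$, which is why reducing to $\lambda\le 1$ at the outset is convenient.
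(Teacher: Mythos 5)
Your argument is correct. The paper gives no proof of this lemma --- it is quoted verbatim from Graham--Kolesnik \cite[Theorem~2.2]{GraKol} --- and your proof is essentially the standard one found there: reduce to $\lambda\le 1$, split $\cI$ according to the nearest integer $\nu$ to $f'$, use the trivial bound where $\|f'-\nu\|<\lambda^{1/2}$ and the Kuzmin--Landau first-derivative test where $\|f'-\nu\|\ge\lambda^{1/2}$; your dyadic decomposition of the latter region is a harmless variant of applying Kuzmin--Landau once with threshold $\lambda^{1/2}$. The only caveat is your parenthetical claim that Kuzmin--Landau follows ``in two lines by Abel summation from $|\sum_{n\le M}\e(\beta n)|\le\|\beta\|^{-1}$'': that is not quite how it goes (the standard proof partial-sums $\e(g(n))=\bigl(\e(g(n+1))-\e(g(n))\bigr)/\bigl(\e(g(n+1)-g(n))-1\bigr)$ using monotonicity of $g(n+1)-g(n)$), but invoking it as a standard named lemma is entirely legitimate here.
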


\section{Outline}
\label{sec:3}

We prove Theorem~\ref{thm:eps} about the bound of the key exponential sum in Section \ref{sec:4}, which is the essential reason we improve the results related to Piatetski-Shapiro primes. We prove the Vinogradov theorem with Piatetsk-Shapiro primes and the Balog-Friedlander condition in Section \ref{sec:5}. The proof of Piatetski-Shapiro primes in arithmetic progressions is in Section \ref{sec:6} while a sketch of Theorem \ref{thm:3} is in Section \ref{sec:7}. In section \ref{sec:8} we provide a more general Balog-Friedlander condition and the related bound of exponential sum due to historical record. 

\subsection{The method of exponential sums}

We mention the remarkable result by Heath-Brown \cite{HB2}, who proved that the Piatetski-Shapiro prime number theorem (equation \eqref{eq:PS}) holds for 
\begin{equation}
\label{eq:HBP}
\max\biggl\{\frac{13}{15},  \frac{12k+10}{12k-2l+13} \biggr\} < \gamma < 1
\end{equation}
and some other restrictions to $k, l, \gamma$, where $(k,l)$ is an exponent pair; for more details see Section 7 in \cite{HB2}. Heath-Brown investigated the exponential sum
\begin{equation}
\label{eq:oes}
\sum_{h\sim H}\bigg| \sum_{n \sim x}\Lambda(n)\e( h n^{\gamma}) \bigg|
\end{equation}
by splitting the sum into Type I and Type II sums via the Heath-Brown identity. He achieved the best bound (up to now) of the Type II sum and calculated the Type I sum by the method of exponent pair. 

We recall that the exponential sum we need to consider is a more complicated form
\begin{equation}
\label{eq:nes}
\sum_{h\sim H}\bigg| \sum_{n \sim x}\Lambda(n)\e\big(\alpha n + h(n+u)^{\gamma}\big) \bigg|. 
\end{equation}
The sum can also be partitioned into a combination of corresponding Type I and Type II sums by the Heath-Brown identity. Balog and Friedlander \cite{BaFr} obtained a same upper bound of the Type II sum as Heath-Brown \cite{HB2}. However, since the function
$$
\alpha n + h(n+u)^{\gamma}
$$
does not satisfies the prerequisite of the method of exponent pair, people cannot prove a bound of the Type I sum as the same strength as Heath-Brown. 

In this paper, we apply Lemma \ref{lem:A} (A-process) and Lemma \ref{lem:B} (B-process), plus a technical approach to switch the Type I sum into a new exponential sum of the form
$$
\sum_n \e\big(f_4(m,n)\big)
$$
where
$$
f_{4}(\nu , n) = (\gamma-1) \gamma^{\gamma/(1-\gamma)} h^{1/(1-\gamma)}\big((n+q)^\gamma - n^{\gamma}\big)^{1/(1-\gamma)}(\nu - \theta)^{\gamma/(\gamma - 1)}
$$
so that we can apply the method of exponent pair. Eventually we prove a upper bound of \eqref{eq:nes} as the same as the bound of \eqref{eq:oes} by Heath-Brown. The readers can compare the assumptions of our Theorem~\ref{thm:eps} with the prerequisite \eqref{eq:HBP} of Heath-Brown's classical result on Piatetski-Shapiro primes. 

\subsection{Comparison to Kumchev \cite{Kumc}}

We also want to mention that our methods are highly related to Balog and Friedlander \cite{BaFr}, Kumchev \cite{Kumc}. Kumchev applied A-process and B-process twice to bound the exponential sum. The reason we did not try the same thing is that the estimation of Kumchev \cite{Kumc} has a minor mistake in his second A-process. The mistake occurs when $q$ is small which is far from the critical range, so it is not clear that the error ruins the whole proof. Since we improve the result by Kumchev, we do not consider to correct the error here.

In Kumchev's paper, the Weyl-van der Corput inequality is applied to obtain the equation (25) with a parameter
\begin{equation}
	\label{eq:ms}
	Q_2 \le FM^{-1}
\end{equation}
with
$$
Q_2 = \lfloor x^{20(1-\gamma) + 24 \delta - 2 + \eps} N^{-1} \rfloor + 1, F = qx^\gamma H N^{-1}
$$
and
\begin{align}
	\label{eq:ms2}
	N \le x^{20(1-\gamma) + 24 \delta - 2 + \eps}. 
\end{align}
Additionally, these parameters satisfy all the conditions 
\begin{enumerate}
	\item $MN\asymp x$, 
	\item $q\leqslant Q = \lfloor x^{2(1-\gamma) + 2\delta + \epsilon}\rfloor  + 1$, 
	\item $H\leqslant x^{1-\gamma + \delta + \epsilon}$,
	\item $FM^{-1}\gg x^{\epsilon}$. 
\end{enumerate}
It is not too hard to analyze that equation \eqref{eq:ms} does not hold all the time. We give an easy case, for example, if we take $q = 1$ and $H = x^{1-\gamma +\epsilon}$, we deduce from \eqref{eq:ms} that
$$
N\geqslant x^{20(1-\gamma) + 24 \delta - 2 + \eps},
$$
which contradicts \eqref{eq:ms2}.

\section{Bounds on exponential sums}
\label{sec:4}

\subsection{Initial approach}
\label{sec:4.1}
In this section we prove Theorem~\ref{thm:eps} by considering sums of the form
$$
\sum_{h \sim H}\delta_h \mathop{\sum_{m\sim M}\sum_{n\sim N}}_{mn\sim x}a_{m}b_{n}\e\big(\alpha mn + h(mn + u)^{\gamma}\big),
$$
where $|\delta_h| \leqslant 1$, $H \leqslant x^{1-\gamma+\eps}$ and $0\leqslant u \leqslant 1$.
If the coefficients $a_{m}$ and $b_{n}$ satisfy the conditions
$$
|a_{m}| \leqslant 1,\quad b_{n} = 1 \quad \text{or}\quad b_{n} = \log n,
$$
we denote the sum by $S_{\uppercase\expandafter{\romannumeral1}}$, and if they satisfy the conditions
$$
|a_{m}| \leqslant 1,\quad |b_{n}|\leqslant 1,
$$ 
we denote it by $S_{\uppercase\expandafter{\romannumeral2}}$.

For $S_{\uppercase\expandafter{\romannumeral2}}$ we use the estimate obtained in Proposition 2 of \cite{BaFr} with a simple adjustment.
\begin{lemma}
	\label{lem:HB1}
	Let $\frac{5}{6}< \gamma <1$ and $N$ satisfies the condition
	$$
	x^{1-\gamma +\eps} \leqslant N \leqslant x^{5\gamma -4-\eps},
	$$
	then 
	$$
	S_{\uppercase\expandafter{\romannumeral2}} \ll x^{1 -\eps}.
	$$
\end{lemma}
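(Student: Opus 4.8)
\noindent The plan is to deduce this Type~II bound from Proposition~2 of Balog--Friedlander~\cite{BaFr}: their estimate already handles a sum of essentially this shape, and the only features of $S_{\mathrm{II}}$ not present there are the linear term $\alpha mn$ and the translation $u$ inside the phase $\alpha mn+h(mn+u)^{\gamma}$. I would therefore run through the Balog--Friedlander argument and check that these two modifications never cost anything. To set it up, since nothing is assumed about $b_{n}$ beyond $|b_{n}|\le 1$, I would apply Cauchy--Schwarz to $S_{\mathrm{II}}$ in the variable $m$, keeping the sums over $h$ and $n$ paired on the inside; it is important not to estimate term by term in $h$, which would introduce a spurious factor of $H$. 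This gives
$$
|S_{\mathrm{II}}|^{2}\le M\sum_{m\sim M}\Bigl|\,\sum_{h\sim H}\ \sum_{\substack{n\sim N\\ mn\sim x}}\delta_{h}b_{n}\,\e\bigl(\alpha mn+h(mn+u)^{\gamma}\bigr)\Bigr|^{2}.
$$
Expanding the square over the pairs $(h_{1},n_{1}),(h_{2},n_{2})$ and interchanging so that the innermost sum is over $m$, one arrives at coefficient-free sums $\sum_{m}\e\bigl(g(m)\bigr)$ with
$$
g(m)=\alpha m(n_{1}-n_{2})+h_{1}(mn_{1}+u)^{\gamma}-h_{2}(mn_{2}+u)^{\gamma}.
$$

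The crucial observation is that the linear term $\alpha m(n_{1}-n_{2})$ enters only $g'(m)$ and is invisible to the second-derivative estimate of van der Corput (Lemma~\ref{lem:GK3}) and to the $B$-process (Lemma~\ref{lem:B}), which are exactly the tools applied to the $m$-sum in~\cite{BaFr}; and for $0\le u\le 1$ the function $(mn+u)^{\gamma}$ has derivatives of every order comparable to those of $(mn)^{\gamma}$, so the size estimates are unchanged. Consequently every inequality in Balog--Friedlander's proof survives verbatim. Concretely, the diagonal $(h_{1},n_{1})=(h_{2},n_{2})$ contributes $\ll M^{2}HN=MHx$ (using $MN\asymp x$), which is $\ll x^{2-2\eps}$ as soon as $N\gg Hx^{\eps}$; in view of $H\le x^{1-\gamma+\eps}$ this is exactly the content of the lower bound $N\ge x^{1-\gamma+\eps}$, and is where that hypothesis is used (after a routine adjustment of $\eps$). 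For the off-diagonal contribution, $g''(m)$ has generic size $\asymp HN^{2}x^{\gamma-2}$, where Lemma~\ref{lem:GK3} is adequate, whereas the tuples with $h_{1}\approx h_{2}$, $n_{1}\approx n_{2}$, for which $g''(m)$ can be much smaller, are handled by the $B$-process applied to the $m$-sum; after organizing the count of tuples and optimizing, one recovers $S_{\mathrm{II}}\ll x^{1-\eps}$ subject to $N\le x^{5\gamma-4-\eps}$, which is the classical Balog--Friedlander Type~II window.

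The only genuine work, then, lies in reproducing the off-diagonal analysis of~\cite{BaFr}, and that is where I expect the main difficulty: the trivial bound $O\bigl((HN)^{2}\bigr)$ for the number of off-diagonal tuples is far too wasteful, so one must partition the tuples by the size of $g''(m)$ and switch between Lemmas~\ref{lem:GK3} and~\ref{lem:B} accordingly, and this dichotomy around the near-vanishing locus of $g''$ is the delicate step. For the present lemma, however, it suffices to verify that this entire scheme tolerates replacing $(mn)^{\gamma}$ by $(mn+u)^{\gamma}$ and inserting the extra $\alpha mn$, which is routine precisely because (as noted) the linear term never appears in a derivative of order $\ge 2$ and $u\in[0,1]$ is absorbed into implied constants. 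I would also record the consistency check that the admissible range $[x^{1-\gamma+\eps},\,x^{5\gamma-4-\eps}]$ for $N$ is non-empty exactly when $\gamma>\tfrac{5}{6}$, matching the standing hypothesis of the lemma.
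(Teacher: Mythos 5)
Your proposal follows essentially the same route as the paper, which proves this lemma simply by invoking Proposition~2 of Balog--Friedlander \cite{BaFr} ``with a simple adjustment''; your Cauchy--Schwarz-in-$m$ setup, the observation that the linear term $\alpha mn$ vanishes under two differentiations, and the remark that the shift $u\in[0,1]$ only perturbs derivative sizes by $O(1)$ factors are exactly the content of that adjustment. The only caveat is that you defer the off-diagonal counting to \cite{BaFr} rather than reproducing it, but so does the paper, so this is consistent with the intended proof.
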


For $S_{\uppercase\expandafter{\romannumeral1}}$ we shall prove the following lemma. 
\begin{lemma}
	\label{lem:type1}
	Assume $(k,l)$ is an exponent pair such that
	\begin{equation}
		4k+2l-1>0,  \notag
	\end{equation} 
	and $\gamma,~N$ satisfy
	\begin{equation}
		\frac{5k-l+3}{6k-2l+4}<\gamma<1\notag
	\end{equation}
	with
	\begin{equation}
		N \geqslant \min \big\{ x^{(1-\gamma) + \frac{1}{2} + \eps}, \max\{x^{\frac{4k+6}{4k-2l+1}(1-\gamma) + \frac{2k-1}{4k-2l+1}+ \eps},x^{2(1-\gamma) + \eps}\} \big\} ,\notag
	\end{equation}
	then
	$$
	S_{\uppercase\expandafter{\romannumeral1}} \ll x^{1-\eps}.
	$$
\end{lemma}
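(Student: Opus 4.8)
The plan is to reduce $S_{\mathrm{I}}$ to a clean exponential sum and then split according to which of the two quantities inside the $\min$ controlling the lower bound on $N$ is the smaller. Partial summation removes $b_{n}$ at the cost of a factor $\log x$, and since $|a_{m}|,|\delta_{h}|\le1$ the triangle inequality reduces matters to estimating $\sum_{h\sim H}\sum_{m\sim M}\bigl|\sum_{n\sim N}\e\bigl(\alpha mn+h(mn+u)^{\gamma}\bigr)\bigr|$ up to logarithms (while keeping the bilinear form at hand for later). Because $mn\sim x$ forces $M\asymp x/N$, the phase $g(n)=\alpha mn+h(mn+u)^{\gamma}$ has $g''(n)\asymp hx^{\gamma}N^{-2}$ on the whole range.

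\emph{Case 1: $N\ge x^{(1-\gamma)+\frac{1}{2}+\eps}$.} Here I would feed $g$ directly into Lemma~\ref{lem:GK3} with $\lambda\asymp hx^{\gamma}N^{-2}$ (legitimately on each subinterval, so the partial-summation step is fine), giving $\sum_{n\sim N}\e(g(n))\ll(hx^{\gamma})^{1/2}+N(hx^{\gamma})^{-1/2}$, and then sum trivially over $m$ and $h$. Using $\sum_{h\sim H}h^{1/2}\ll H^{3/2}$, $\sum_{h\sim H}h^{-1/2}\ll H^{1/2}$, $M\asymp x/N$ and $H\le x^{1-\gamma+\eps}$ one gets $S_{\mathrm{I}}\ll x^{\eps}\bigl(x^{5/2-\gamma}N^{-1}+x^{3/2-\gamma}\bigr)$, which is $\ll x^{1-\eps}$ precisely when $\gamma>\frac{1}{2}$ and $N\ge x^{(1-\gamma)+\frac{1}{2}+\eps}$. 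From now on I may therefore assume $N<x^{(1-\gamma)+\frac{1}{2}+\eps}$ together with $N\ge\max\{x^{\frac{4k+6}{4k-2l+1}(1-\gamma)+\frac{2k-1}{4k-2l+1}+\eps},\,x^{2(1-\gamma)+\eps}\}$; note that the hypothesis on $\gamma$ forces $\gamma>\frac{5}{6}$, so in this range $N<x^{2\gamma-1}$ — a fact I will use below.

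\emph{Case 2: the $A$-, $B$- and exponent-pair argument.} Following Section~\ref{sec:3}, I would first Cauchy--Schwarz in $m$ (and $h$) to drop $a_{m}$ and $\delta_{h}$, then apply Lemma~\ref{lem:A} to the inner $n$-sum with differencing parameter $Q_{1}\asymp x^{2(1-\gamma)+\eps}$, admissible since $Q_{1}\le N$. Differencing annihilates the linear term $\alpha mn$ (it becomes the $n$-free quantity $\alpha mq$), leaving the phase $h\bigl((mn+u)^{\gamma}-(m(n-q)+u)^{\gamma}\bigr)$; the diagonal $q=0$ contributes $\ll x^{2-\eps}$ to $|S_{\mathrm{I}}|^{2}$ precisely because $N\ge x^{2(1-\gamma)+\eps}$. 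After interchanging summations, for each $q\ne0$ one is left with a genuinely bilinear sum over $m,n$ with $mn\sim x$ whose phase, \emph{as a function of $m$}, is the linear term $\alpha qm$ plus — up to negligible lower-order terms — the monomial $hm^{\gamma}\bigl(n^{\gamma}-(n-q)^{\gamma}\bigr)$. I would then apply Lemma~\ref{lem:B} to the $m$-sum: its second derivative is negative of size $\asymp FM^{-2}$ with $F\asymp hqx^{\gamma}N^{-1}$, so the hypotheses hold, and the stationary-phase expansion replaces it by a dual sum over $\nu$ with weights $|G''(m_{\nu})|^{-1/2}$ and phase $-f_{4}(\nu,n)+\mathrm{const}$, where
$$
f_{4}(\nu,n)=(\gamma-1)\gamma^{\gamma/(1-\gamma)}h^{1/(1-\gamma)}\bigl((n+q)^{\gamma}-n^{\gamma}\bigr)^{1/(1-\gamma)}(\nu-\theta)^{\gamma/(\gamma-1)}
$$
is the Legendre transform, with $\theta=\alpha q$ the surviving linear coefficient; the accompanying error $O\bigl(\log x+F^{-1/2}M\bigr)$, once summed against the weights and the $HMN/Q_{1}=Hx/Q_{1}$ prefactor, is $\ll x^{2-\eps}$ exactly because $N<x^{2\gamma-1}$.

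It remains to estimate $\sum_{n\sim N}(\mathrm{coeff})\,\e\bigl(-f_{4}(\nu,n)\bigr)$. Since $(n+q)^{\gamma}-n^{\gamma}\asymp qn^{\gamma-1}$, the function $f_{4}(\nu,\cdot)$ is, up to lower-order corrections, a constant times $n^{-1}$, so after removing the coefficients by partial summation the exponent pair $(k,l)$ applies and yields a bound of the form $(hqx^{\gamma})^{k}N^{l-2k}$ — the clean exponent $hqx^{\gamma}$ appearing because $\tfrac{1}{1-\gamma}+\tfrac{\gamma}{\gamma-1}=1$. Substituting this back, summing over $\nu$ (an interval of length $\asymp hqx^{\gamma-1}$), over $h\sim H$ and over $q<Q_{1}$, inserting $H=x^{1-\gamma+\eps}$ and $Q_{1}=x^{2(1-\gamma)+\eps}$, and collecting exponents gives $|S_{\mathrm{I}}|^{2}\ll x^{2-\eps}$, hence $S_{\mathrm{I}}\ll x^{1-\eps}$, provided the exponents produced by $(k,l)$ carry the right sign — which is what the conditions $4k+2l-1>0$ and $\gamma>\tfrac{5k-l+3}{6k-2l+4}$ ensure — and provided $N\ge x^{\frac{4k+6}{4k-2l+1}(1-\gamma)+\frac{2k-1}{4k-2l+1}+\eps}$. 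I expect the $B$-process step to be the main obstacle: the phase in $m$ is only approximately a monomial, so $m_{\nu}$ and the Legendre transform are not available in closed form; one must track the lower-order terms through the computation, confirm that the amplitude $F$ and the length $M$ lie in the ranges Lemma~\ref{lem:B} requires, and check that the resulting $f_{4}$ genuinely satisfies the derivative conditions the exponent pair needs — and it is exactly this bookkeeping, together with the size of the $B$-process error term, that cuts out the admissible region for $(\gamma,N)$.
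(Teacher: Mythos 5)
Your proposal follows essentially the same route as the paper: the second-derivative test (Lemma~\ref{lem:GK3}) in the range $N\ge x^{(1-\gamma)+\frac{1}{2}+\eps}$, and otherwise Cauchy plus the A-process with $Q\asymp x^{2(1-\gamma)+\eps}$, reduction to the exact monomial phase $hm^{\gamma}\big((n+q)^{\gamma}-n^{\gamma}\big)+\theta m$, the B-process in $m$, and an exponent pair applied to the dual $n$-sum, with the same case split on the $\min$ and the same use of $\gamma>\frac{5}{6}$ to merge the ranges. The one difficulty you flag at the end --- that $m_{\nu}$ and the Legendre transform are not available in closed form --- does not arise in the paper, because the partial-summation reduction to the exact monomial in $m$ is carried out \emph{before} Lemma~\ref{lem:B} is applied, so $m_{\nu}$ and $f_{4}$ are computed explicitly.
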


\begin{proof}
	We only need to deal with the situation when $b_{n} = 1$, otherwise we can remove $b_{n}$ by a partial integration. Set
	$$
	f_0(m,n) = h(mn + u)^{\gamma}+ \alpha mn.
	$$
	We trivially have
	\begin{align}
		S_{\uppercase\expandafter{\romannumeral1}} = &\sum_{h\sim H}\delta_h\sum_{m\sim M}a_m\sum_{n\sim N}\e \big(f_{0}(m,n)\big) \notag \\
		\leqslant & \sum_{h \sim H}\sum_{m\sim M}\bigg| \sum_{n \sim N}e\big(f_{0}(m,n)\big) \bigg| \label{1}.
	\end{align}
	
	Firstly, we treat \eqref{1} in a simple way. We can calculate for $m\sim M$, $n\sim N$, $MN\asymp x$, $h\sim H$,
	$$
	\frac{\partial^2 f_{0}}{\partial n^2}(m,n) = \gamma(\gamma -1)hm^{2}(mn+u)^{\gamma-2} \asymp Hx^{\gamma}N^{-2}.
	$$
	By Lemma~\ref{lem:GK3}, \eqref{1} and the condition $H\leqslant x^{1-\gamma + \eps}$, we know that
	\begin{align*}
		S_{\uppercase\expandafter{\romannumeral1}} \ll& x^{\gamma/2}H^{3/2}M + x^{1-\gamma/2}H^{1/2} \\ \ll& x^{\eps}(x^{5/2-\gamma}N^{-1} + x^{3/2-\gamma}).
	\end{align*}
	Thus we have $S_{\uppercase\expandafter{\romannumeral1}} \ll x^{1-\eps}$ for all sufficiently small positive $\eps$ provided that
	\begin{align}
		\label{2}
		1>\gamma >\frac{1}{2}
	\end{align}
	and
	\begin{align}
		\label{3}
		N\geqslant x^{(1-\gamma) + \frac{1}{2} + \eps}.
	\end{align}
	Secondly, we can also estimate $S_{\uppercase\expandafter{\romannumeral1}}$ differently. Since
	\begin{align*}
		f_{0}(m,n) =f_{1}(m,n) + O\big(g_1(m,n)\big),
	\end{align*}
	where
	$$
	f_1(m,n) = \alpha mn + h(mn)^{\gamma}+\gamma u h (mn)^{\gamma-1}
	$$
	and
	$$
	g_1(m,n) \ll Hx^{\gamma-2},
	$$
	one has
	$$
	S_{\uppercase\expandafter{\romannumeral1}} = \sum_{h\sim H}\delta_h\sum_{m\sim M}a_m\sum_{n\sim N}\e \big(f_{1}(m,n)\big)\e\big(g_1(m,n)\big).
	$$
	Thus
	\begin{align}
		&S_{\uppercase\expandafter{\romannumeral1}} - \sum_{h\sim H}\delta_h\sum_{m\sim M}a_m\sum_{n\sim N}\e \big(f_{1}(m,n)\big) \notag\\=& \sum_{h\sim H}\delta_h\sum_{m\sim M}a_m\sum_{n\sim N}\e \big(f_{1}(m,n)\big)\Big(\e\big(g_1(m,n)\big)-1\Big)\notag\\
		\ll&(HMN)(Hx^{\gamma -2}) \ll x^{1-\gamma + \eps}.\label{4}
	\end{align}
	The right-hand side of \eqref{4} $\ll x^{1-\eps}$ holds for sufficiently small positive $\eps$ and we obtain that 
	$$
	S_{\uppercase\expandafter{\romannumeral1}} \ll \sum_{h\sim H}\bigg| \sum_{m\sim M}\sum_{n\sim N}a_m\e\big(f_{1}(m,n)\big) \bigg| + x^{1-\eps}.
	$$
	We apply the Cauchy inequality to the sum over $m$ and the Weyl-van der Corput inequality (Lemma~\ref{lem:A}) to the sum over $n$. It follows that
	\begin{align}
		S_{\uppercase\expandafter{\romannumeral1}} \ll & \sum_{h\sim H}M^{1/2}\bigg( \sum_{m \sim M}\bigg| \sum_{n \sim N}\e\big(f_{1}(m,n)\big) \bigg|^2 \bigg)^{1/2} + x^{1-\eps} \notag \\
		\ll & \sum_{h\sim H}M^{1/2}\bigg( \frac{xN}{Q} + \frac{N}{Q}\sum_{1\leqslant q \leqslant Q}\bigg| \sum_{m\sim M}\sum_{\substack{n\sim N}}\e\big(f_{2}(m,n)\big) \bigg| \bigg)^{1/2} + x^{1-\eps} \notag \\
		\ll & \frac{Hx}{Q^{1/2}} + \frac{x^{1/2}}{Q^{1/2}}\sum_{h\sim H}\bigg( \sum_{1\leqslant q \leqslant Q}\bigg| \sum_{m\sim M}\sum_{\substack{n\sim N}}\e\big(f_{2}(m,n)\big) \bigg| \bigg)^{1/2} + x^{1-\eps},\label{5}
	\end{align}
	where $1\leqslant Q \leqslant N$ and 
	\begin{align*}
		f_{2}(m,n) = & f_{1}(m,n+q)-f_{1}(m,n).
	\end{align*}
	If we choose
	$$
	Q = \lfloor x^{2(1-\gamma) + \eps} \rfloor + 1,
	$$
	the first term in the right-hand side of \eqref{5} $\ll x^{1-\eps}$ and $N$ satisfies that
	\begin{align}
		\label{6}
		N \geqslant x^{2(1-\gamma )+ \eps}.
	\end{align}
	Applying partial summations to the sum over $n$ and $m$ successively we find that if $N\geqslant x^{2(1-\gamma) + \eps}$(which happens to be the same as \eqref{6}),
	\begin{align}
		\label{7}
		\bigg| \sum_{m\sim M}\sum_{\substack{n\sim N}}\e\big(f_{2}(m,n)\big) \bigg| \ll \bigg| \sum_{m\sim M}\sum_{\substack{n\sim N}}\e\Big(hm^{\gamma} \big((n+q)^{\gamma} - n^{\gamma}\big) + \alpha q m\Big) \bigg|.
	\end{align}
	We write $\theta = \{\alpha q\}$ (note that $\theta$ does not depend on $m$ and $n$, and $0\leqslant\theta<1$) and we derive from \eqref{7} that
	$$
	\bigg| \sum_{m\sim M}\sum_{\substack{n\sim N}}\e\big(f_{2}(m,n)\big) \bigg| \ll \bigg| \sum_{m\sim M}\sum_{\substack{n\sim N}}\e\big(f_{3}(m,n)\big) \bigg|,
	$$
	where
	$$
	f_{3}(m,n) = hm^{\gamma}\big((n+q)^{\gamma} - n^{\gamma}\big) + \theta m.
	$$
	Hence
	\begin{align}
		\label{8}
		S_{\uppercase\expandafter{\romannumeral1}} \ll \frac{x^{1/2}}{Q^{1/2}}\sum_{h\sim H}\bigg(\sum_{1\leqslant q \leqslant Q}\bigg|\sum_{m\sim M}\sum_{\substack{n\sim N}}\e\big(f_{3}(m,n)\big)\bigg|\bigg)^{1/2} + x^{1-\eps}
	\end{align}
	provided \eqref{6}.
	$f_{3}(m,n)$ satisfies that
	\begin{align}
		\label{9}
		\frac{\partial f_{3}}{\partial m}(m,n) = \gamma h m^{\gamma - 1}\big((n+q)^\gamma - n^{\gamma}\big) + \theta \asymp qHx^{\gamma-1}+\theta
	\end{align}
	and
	\begin{align}
		\label{10}
		\frac{\partial^2 f_{3}}{\partial m^2}(m,n) = \gamma(\gamma-1) h m^{\gamma - 2}\big((n+q)^\gamma - n^{\gamma}\big)\asymp qHx^{\gamma-2}N.
	\end{align}
	Write $m_{\nu} = m_{\nu}(n)$ which is the solution of equation
	\begin{align}
		\label{11}
		\frac{\partial f_{3}}{\partial m}(m_{\nu},n) = \nu,
	\end{align}
	where $\nu$ runs through 
	\begin{align}
		\label{12}
		\bigg[ \frac{\partial f_{3}}{\partial m}(M_{2},n),\frac{\partial f_{3}}{\partial m}(M_{1},n) \bigg]
	\end{align}
	and $m$ runs through $[M_{1},M_{2}] \subset (M, 2M] $. Furthermore, the endpoints of \eqref{12} $\asymp qHx^{\gamma-1} + \theta$ from \eqref{9}. By \eqref{11}, we have that
	\begin{equation}
	\label{eq:mnu}
	m_{\nu} = \gamma^{1/(1-\gamma)} h^{1/(1-\gamma)} \big( (n+q)^\gamma - n^\gamma \big)^{1/(1-\gamma)} (\nu - \theta)^{1/(\gamma-1)}. 
	\end{equation}
	Considering the function
	$$
	f_{4}(\nu,n) = -f_{3}(m_{\nu},n) + \nu m_{\nu}
	$$
	and \eqref{eq:mnu}, we calculate that
	$$
	f_{4}(\nu , n) =  (\gamma-1) \gamma^{\gamma/(1-\gamma)} h^{1/(1-\gamma)}\big((n+q)^\gamma - n^{\gamma}\big)^{1/(1-\gamma)}(\nu - \theta)^{\gamma/(\gamma - 1)}.
	$$
	We change the order of summation and apply Poisson summation formula (Lemma~\ref{lem:B}) to the sum over $m$, getting that
	\begin{align}
		&\sum_{m\sim M}\sum_{\substack{n\sim N}}\e\big(f_{3}(m,n)\big) \notag \\
		= &\sum_{\substack{n\sim N}}\bigg( \e(-1/8)\sum_{\nu}\bigg(\frac{\partial^2 f_{3}}{\partial m^2}(m_{\nu},n) \bigg)^{-1/2}\e\big(-f_{4}(\nu,n)\big) \notag \\
		&+ O\Big(\log (qHx^{\gamma - 1} + 2) + (qHx^{\gamma}N^{-1})^{-1/2}M\Big) \bigg)\notag\\
		\ll & \bigg|\sum_{\nu}\sum_{n\sim N}\bigg(\frac{\partial^2 f_{3}}{\partial m^2}(m_{\nu},n) \bigg)^{-1/2}\e\big(f_{4}(\nu,n)\big)\bigg|\notag \\ &+ N\log x + q^{-\frac{1}{2}}H^{-\frac{1}{2}}x^{\frac{1}{2}(1-\gamma) + \frac{1}{2}}N^{\frac{1}{2}}.\label{13}
	\end{align}
	Inserting \eqref{13} to \eqref{8} gives that the second and third terms of the right-hand side of \eqref{13} are admissible when
	\begin{align}
		\label{14}
		N \leqslant x^{-2(1-\gamma)+1-\eps}.
	\end{align}
	To deal with the first term in \eqref{13}, we remove the factor $\big(\frac{\partial^2 f_{3}}{\partial m^2}(m_{\nu},n) \big)^{-1/2}$ by the Abel summation. By \eqref{10} and \eqref{eq:mnu}, we have 
	\begin{align*}
		\frac{\partial^2 f_{3}}{\partial m^2}(m_{\nu},n) &= \gamma(\gamma-1) h m_{\nu}^{\gamma - 2}\big((n+q)^\gamma - n^{\gamma}\big)\\
		&= (\gamma-1)\gamma^{1/(\gamma-1)}h^{1/(\gamma-1)}\big((n+q)^\gamma - n^{\gamma}\big)^{1/(\gamma - 1)}(\nu - \theta)^{(2-\gamma)/(1-\gamma)},
	\end{align*}
	which implies that $(\frac{\partial^2 f_{3}}{\partial m^2}(m_{\nu},n) )^{-1/2}$ is monotonic in $n$ and $\nu$. By Abel summation and \eqref{10}, we deduce that
	\begin{align}
		&\sum_{\nu}\sum_{n}\bigg(\frac{\partial^2 f_{3}}{\partial m^2}(m_{\nu},n) \bigg)^{-1/2}\e\big(f_{4}(\nu,n)\big) \notag \\
		\ll & (qHx^{\gamma-2}N)^{-1/2}\bigg|\sum_{\nu}\sum_{n \sim N}\e\big(f_{4}(\nu,n)\big)\bigg|.\label{15}
	\end{align}
	Combining \eqref{8}, \eqref{13} and \eqref{15}, we know that
	\begin{align}
		\label{16}
		S_{\uppercase\expandafter{\romannumeral1}} \ll \frac{x^{(1-\gamma)/4 + 3/4}}{Q^{1/2}H^{1/4}N^{1/4}}\sum_{h\sim H}\bigg(\sum_{1\leqslant q \leqslant Q}q^{-1/2}\bigg|\sum_{\nu}\sum_{n \sim N}\e\big(f_{4}(\nu,n)\big)\bigg|\bigg)^{1/2} + x^{1-\eps}
	\end{align}
	provided \eqref{6} and \eqref{14}. 
	Next, we will estimate the sum over $n$ by the method of exponent pair. By well-known Faà di Bruno's formula giving the $r$-th derivative of composition functions, it follows that
	$$
	\frac{\partial^{r} f_{4}}{\partial n^{r}}(\nu ,n) \asymp qHx^{\gamma}N^{-1-r},
	$$
	we have
	$$
	\sum_{n}\e\big(f_{4}(\nu,n)\big) \ll (qHx^{\gamma}N^{-2})^{k}N^{l} + (qHx^{\gamma}N^{-2})^{-1},
	$$
	where $(k,l)$ is an exponent pair. The second term of this estimate can be omitted when
	$$
	N\leqslant x^{-\frac{2}{3}(1-\gamma) + 1 - \eps},
	$$
	which is weaker than \eqref{14}.
	Substituting this estimate in \eqref{16}, we find that
	$$
	S_{\uppercase\expandafter{\romannumeral1}}\ll x^{1-\eps}
	$$
	holds when
	$$
	N^{4k-2l+1}\geqslant x^{(4k+6)(1-\gamma) + (2k-1) + \eps},
	$$
	additionally with \eqref{6} and \eqref{14}. To get the lower bound of $N$, we need to choose exponent pairs satisfying
	\begin{align}
		\label{17}
		4k-2l+1>0.
	\end{align}
	Hence 
	$$
	S_{\uppercase\expandafter{\romannumeral1}}\ll x^{1-\eps}
	$$
	provided \eqref{17},
	\begin{align}
		\label{18}
		\frac{5k-l+3}{6k-2l+4}<\gamma<1
	\end{align}
	and
	\begin{align}
		\label{19}
		\max\{x^{\frac{4k+6}{4k-2l+1}(1-\gamma) + \frac{2k-1}{4k-2l+1}+\eps}, x^{2(1-\gamma) + \eps}\}\leqslant N\leqslant x^{-2(1-\gamma) + 1 - \eps}.
	\end{align}
	\eqref{18} is to make sure the range of \eqref{19} exists. 
	
	In conclusion, if one of the conditions
	
	(i) \eqref{2} and \eqref{3},
	
	(ii) \eqref{17}, \eqref{18} and \eqref{19}\\
	holds, then
	$$
	S_{\uppercase\expandafter{\romannumeral1}}\ll x^{1-\eps}.
	$$
	If \eqref{17} and \eqref{18} hold, we note that
	\begin{align*}
	(1-\gamma) + \frac{1}{2} < -2(1-\gamma) + 1,
	\end{align*}
	which means we can put the range of \eqref{3} and \eqref{19} together and finish the proof of Lemma~\ref{lem:type1}
%
\end{proof}

Eventually, we combine our better exponent sum estimation by the Heath-Brown identity \cite{HB2}. 
\begin{lemma}
	\label{lem:HB2}
	Let $3\leqslant U<V<Z<x$ and suppose that $Z-\frac{1}{2}\in \mathbb{N}$, $x\geqslant 64Z^{2}U,~Z\geqslant4U^{2},~V^{3}\geqslant 32x$. Assume further that $f(n) = 0$ when $n\leqslant x$ or $n > 2x$ and that $|f(n)| \leqslant f_{0}$ otherwise. Then the sum
	$$
	\sum_{n \sim x}\Lambda(n)f(n)
	$$
	may be decomposed into $O(\log^{10}x)$ sums, each either of Type \uppercase\expandafter{\romannumeral1} with $N>Z$, or of Type \uppercase\expandafter{\romannumeral2} with $U<N<V$.
\end{lemma}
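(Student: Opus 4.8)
The plan is to follow the classical route of \cite{HB2}: couple the Heath--Brown identity with a dyadic dissection and a combinatorial sorting of the variables that result. In fact this is precisely Heath--Brown's lemma, and the three numerical hypotheses $x\ge 64Z^{2}U$, $Z\ge 4U^{2}$, $V^{3}\ge 32x$ are exactly the inequalities that make the sorting step succeed, so in practice one simply cites it; but the argument behind it runs as follows. First I would invoke the Heath--Brown identity with parameter $k=3$ (the exponent $3$ in the hypothesis $V^{3}\ge 32x$ matching this choice) and smoothing level $z=V$: since $V^{3}\ge 32x>2x$, for every $n\in(x,2x]$ we have
\[
\Lambda(n)=\sum_{j=1}^{3}(-1)^{j-1}\binom{3}{j}\mathop{\sum\cdots\sum}_{\substack{m_{1}\cdots m_{j}a_{1}\cdots a_{j}=n\\ m_{1},\dots,m_{j}\le V}}\mu(m_{1})\cdots\mu(m_{j})\,\log a_{1}.
\]
Substituting this into $\sum_{n\sim x}\Lambda(n)f(n)$ rewrites it as a bounded linear combination of at most three multilinear sums, in which the $\mu$-variables are confined to $[1,V]$ and exactly one ``smooth'' variable carries a $\log$ while the others carry the coefficient $1$.

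Next I would split each of the at most six variables $m_{i},a_{i}$ into dyadic blocks $m_{i}\sim M_{i}$, $a_{i}\sim A_{i}$ with $M_{1}\cdots M_{j}A_{1}\cdots A_{j}\asymp x$ and each $M_{i}\le 2V$ (allowing one further bisection of a single block in the sorting step below). Amalgamating variables produces convolution coefficients bounded by $d_{3}(n)\ll_{\eps}n^{\eps}$, so this step expresses $\sum_{n\sim x}\Lambda(n)f(n)$ as a combination of $O(\log^{10}x)$ multilinear sums over the dyadic blocks, each with coefficients $\ll x^{\eps}$ and with the weight $f$ evaluated at the product of all its variables; the spurious $x^{\eps}$ is harmless and disappears on shrinking $\eps$ at the end, or, if one insists on the literal normalisation $|a_{m}|\le 1$, on refining the dissection.

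The heart of the matter is the sorting dichotomy, which I would carry out as follows. In each of these $O(\log^{10}x)$ sums, if one of the smooth blocks $A_{i}$ has length $>Z$, detach it as the smooth variable (it carries $1$ or $\log$) and amalgamate everything else: this is a Type~I sum with $N>Z$. Otherwise every smooth block has length $\le Z$; now order all the blocks by length, $L_{(1)}\le\cdots\le L_{(2j)}$, and follow the partial products $P_{0}=1$, $P_{r}=L_{(1)}\cdots L_{(r)}$, which run from $P_{0}<U$ (since $U\ge 3$) up to $P_{2j}\asymp x>V$ (since $V<Z<x$). The point is that under the stated hypotheses no partial product can leap straight from $(0,U]$ to $[V,\infty)$: here $Z\ge 4U^{2}$ controls the products of the short $\mu$-blocks, $x\ge 64Z^{2}U$ keeps the scales $U$, $Z$, $x$ apart, $V^{3}\ge 32x$ prevents the $\mu$-part alone from exhausting all of $x$, and a $\mu$-block that is itself long---necessarily of length $\le 2V$---can simply be taken as the required block of length in $(U,V)$. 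Hence some $P_{r}$ with $1\le r\le 2j-1$ lies in $(U,V)$, and splitting the blocks into $P_{r}$ and its complement (both carrying coefficients $\ll x^{\eps}$) exhibits a Type~II sum with $U<N<V$. Summing over the $O(\log^{10}x)$ pieces then gives the lemma.

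The step I expect to be the genuine obstacle is this last one: making the sorting argument airtight by enumerating the finitely many configurations of the dyadic lengths and checking that the constants $4$, $32$, $64$ are exactly strong enough---in particular in the case where several $\mu$-blocks sit simultaneously close to $V$, or where all the smooth blocks cluster just below $Z$, which is where the auxiliary bisection is used and where the constants get consumed. Everything else---the identity, the dyadic bookkeeping that yields the power $\log^{10}$, and the divisor-function cleanup---is routine.
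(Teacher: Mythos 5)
The paper offers no proof of this statement: it is quoted verbatim (constants, the $\log^{10}$, and all) from Heath--Brown \cite{HB2}, where it is Lemma~3, so there is no in-paper argument to compare yours against, and simply citing it --- as you yourself suggest --- is what the authors do. Your reconstruction of the architecture (Heath--Brown identity with $k=3$, dyadic dissection, sorting dichotomy) is the right one. One small but clarifying correction of bookkeeping: the natural truncation level is $z=(2x)^{1/3}$ rather than $z=V$; the hypothesis $V^{3}\ge 32x$ then gives $z\le V/2^{4/3}$, so every dyadic $\mu$-block automatically has length $<V$ and your ad hoc bisection is not needed.

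The genuine gap is in the one step you do try to justify, the sorting dichotomy. The claim that ``no partial product can leap straight from $(0,U]$ to $[V,\infty)$'' is false under the stated hypotheses. Two failure modes: (i) if $V\le U^{2}$ (the hypotheses only force $V\ge 32U^{5/3}$, so this is permitted once $U\ge 2^{15}$, and $U$ is a power of $x$ in the application), two blocks of length about $\sqrt{V}\le U$ produce the jump $P_{r-1}\le U$, $P_{r}\ge V$ with no block in $(U,V)$; (ii) more importantly, a configuration with two smooth blocks $N_{1},N_{2}\in[V,Z]$ and all remaining blocks of length $\le U$ with product $\le U$ is consistent with all three numerical hypotheses (it satisfies $x\asymp$ product), every individual block and every partial product of the small blocks avoids $(U,V)$, and no smooth block exceeds $Z$ --- so neither branch of your dichotomy applies. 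The missing idea is the complementary observation: a sub-product $P$ with $x/V<P<x/U$ also yields a Type~II sum, by taking $N$ to be its complement. This is exactly what the three constants are calibrated for: $Z\ge 4U^{2}$ guarantees that any ``leap'' of small blocks lands in $[V,Z/4]$, so every sub-product that exceeds $U$ without passing through $(U,V)$ has size in $[V,2Z]$; at most two such factors can occur (three would force $N_{1}N_{2}N_{3}\ge V^{3}\ge 32x>2x$); and for one or two of them the complement $x/(N_{1}N_{2})$ satisfies $x/(N_{1}N_{2})\ge x/(4Z^{2})\ge 16U>U$ by $x\ge 64Z^{2}U$ and $x/(N_{1}N_{2})\le x/V^{2}\le x^{1/3}/32^{2/3}<V$ by $V^{3}\ge 32x$. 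Without this complement step the dichotomy does not close, so as written your proof would fail precisely in the configurations that occur in the intended application ($Z>V$, so smooth blocks in $[V,Z]$ really arise).
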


We apply Lemma~\ref{lem:HB2} with $$f(n) = \sum\limits_{h\sim H}\delta_h \e\big(\alpha n + h(n + u)^{\gamma}\big),$$ $U = 2^{-10}x^{1-\gamma +\eps}$, $V= 4x^{1/3}$ and
$$
Z=\max \big( \lfloor x^{\frac{4k+6}{4k-2l+1}(1-\gamma) + \frac{2k-1}{4k-2l+1}+\eps}\rfloor, \lfloor 5x^{1/3} \rfloor, \lfloor x^{2(1-\gamma) + \eps}\rfloor \big) + \frac{1}{2}.
$$
Note that $x\geqslant 64Z^{2}U$ implies
\begin{align*}
	\frac{12k+10}{12k-2l+13}<\gamma<1
\end{align*}
and this condition is stronger than \eqref{18} when \eqref{17} holds. In order to apply Lemma \ref{lem:HB1}, we require the additional condition $V\leqslant x^{5\gamma-4-\epsilon}$, which implies
$$
\frac{13}{15}<\gamma<1.
$$
Then Theorem~\ref{thm:eps} follows from Lemma~\ref{lem:HB1} and Lemma~\ref{lem:type1}.

\section{The ternary Goldbach problem on Piatetski-Shapiro primes}
\label{sec:5}
We prove Theorem \ref{thm:3ps} via Theorem \ref{thm:wb} and a key lemma proved by Sun, Du and Pan \cite{SDP}. 
\subsection{Proof of Theorem \ref{thm:wb}}
The following proof is similar to Balog's argument in Theorem~4 of \cite{BaFr} with slight changes to the choice of parameters. The case $\alpha = 0$ is simply the Piatetski-Shapiro prime number theorem; see \cite{RiSa}. Let
$$
T(\alpha) = \frac{1}{\gamma}\sum_{\substack{p \le N \\ p \in \mathcal{N}^{(c)}}} p^{1-\gamma}\log p \cdot \e(\alpha p).
$$ 
By Lemma \ref{lem:PS}, it follows that
\begin{align*}
T(\alpha) &=  \frac{1}{\gamma}\sum_{p \le N} \e(\alpha p) p^{1-\gamma}\log p \big( \fl{-p^\gamma} - \fl{-(p+1)^\gamma} \big)\\
&= S_1 + S_2 + O(\log N)
\end{align*}
where
$$
S_1 = \sum_{p \le N} \log p\cdot \e(\alpha p)
$$
and
$$
S_2 = \frac{1}{\gamma} \sum_{p \le N} \e(\alpha p) p^{1-\gamma} \log p \big( \psi(-(p+1)^\gamma) - \psi(-p^\gamma) \big)
$$
It is sufficient to show that
$$
S_2 \ll N^{1-\eps}. 
$$
Using integration by parts, we have
$$
S_2 = \frac{1}{\gamma} \sum_{n \le N} \Lambda(n) \e(\alpha n) n^{1-\gamma} \big( \psi(-(p+1)^\gamma) - \psi(-p^\gamma) \big) + O(N^{3/2-\gamma})
$$
in which the error term is admissible for $\gamma > 1/2 + \eps$. Applying a splitting argument, it is sufficient to prove
$$
 \sum_{n \sim x} \Lambda(n) \e(\alpha n) n^{1-\gamma} \big( \psi(-(p+1)^\gamma) - \psi(-p^\gamma) \big)\ll x^{1-\eps}
$$
when $x \leqslant N$.
By the Vaaler's approximation (Lemma \ref{lem:Vaaler}), we break the sum into
$$
S_3 + O(S_4)
$$
where
$$
S_3 = \frac{1}{\gamma} \sum_{0<|h|\le H_{0}} a_h \sum_{n \sim x} \Lambda(n) \e(\alpha n) n^{1-\gamma} \big(\e(h(n+1)^\gamma) - \e(hn^\gamma) \big)
$$
and
$$
S_4 = \sum_{|h|\le H_{0}} b_h \sum_{n \sim x} \Lambda(n) \e(\alpha n) n^{1-\gamma} \big(\e(h(n+1)^\gamma) + \e(hn^\gamma) \big)
$$
in which $H_{0} = x^{1-\gamma + \eps}$. 

Firstly, we prove $S_3 \ll x^{1-\eps}$. It is sufficient to prove that
$$
\sum_{h \sim H} h^{-1} \bigg| \sum_{n \sim x} \Lambda(n) \e(\alpha n) n^{1-\gamma} \big(\e(h(n+1)^\gamma) - \e(hn^\gamma) \big) \bigg| \ll x^{1-\eps} 
$$
for $H\leqslant x^{1-\gamma +\epsilon}$. Writing that
$$
\e(h(n+1)^\gamma) - \e(hn^\gamma) = 2\pi i \gamma h \int_0^1 (n+u)^{\gamma-1} \e(h(n+u)^\gamma) \, du. 
$$
By a partial summation, it is sufficient to prove that
$$
\sum_{h \sim H} \bigg|\sum_{n \sim x} \Lambda(n) \e\big(\alpha n + h(n+u)^\gamma\big) \bigg| \ll x^{1-\eps}
$$
for $0 \le u \le 1$ and $H\leqslant x^{1-\gamma+\epsilon}$, which is done by Corollary~\ref{cor:eps} for
\begin{align*}
	\frac{498}{596}<\gamma<1
\end{align*}
and uniformly for $\alpha$. 

Eventually, we consider $S_4$. The contribution from $h=0$ in $S_4$ is 
$$
\ll H_{0}^{-1} \sum_{n \sim x} \Lambda(n) \e(\alpha n) n^{1-\gamma} \ll x^{1- \eps}. 
$$
The contribution from $h \neq 0$ in $S_4$ can be bounded as $S_{3}$.

\subsection{Proof of Theorem \ref{thm:3ps}}

 If the error term in \eqref{eq:BF} becomes $O(N/\log^A N)$ for any positive number $A \ge 2$, we call that $c$ satisfies the weak Balog-Friedlander condition. We mention a useful proposition, which is \cite[Corollary 3.2]{SDP}.

\begin{lemma}
\label{lem:SDP}
Suppose that $c_1, c_2, c_3 \in (1, \frac{6}{5})$ satisfy the weak Balog-Friedlander condition. Then every sufficiently large odd $N$ can be represented as 
$$
N = p_1 + p_2 + p_3
$$
where $p_i \in \cN^{(c_i)}$ for $1 \le i \le 3$ with \eqref{eq:PS3} holds and the singular series $\fS$ is defined as \eqref{eq:ss}. 
\end{lemma}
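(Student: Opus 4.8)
The plan is to run the Hardy--Littlewood circle method, using the arithmetic characterisation in Lemma~\ref{lem:PS} to split each Piatetski--Shapiro weight into a smooth density and an oscillating sawtooth part, and to feed the sawtooth part into the (weak) Balog--Friedlander hypothesis. Since prime powers contribute negligibly, it suffices to evaluate the logarithmically weighted count
$$
\widetilde R(N)=\sum_{n_1+n_2+n_3=N}\prod_{i=1}^3\Lambda(n_i)\mathbf{1}^{(c_i)}(n_i),
$$
and then recover $\sum 1$ in \eqref{eq:PS3} by a three--fold partial summation that removes the factors $\log n_i$ and supplies the $\log^{-3}N$. By Lemma~\ref{lem:PS} I would write $\mathbf{1}^{(c_i)}(n)=\gamma_i n^{\gamma_i-1}+\Psi_i(n)+O(n^{\gamma_i-2})$ with $\Psi_i(n)=\psi(-(n+1)^{\gamma_i})-\psi(-n^{\gamma_i})$, and expand the triple product; with generating functions $U_i(\alpha)=\gamma_i\sum_{n\le N}\Lambda(n)n^{\gamma_i-1}\e(\alpha n)$ and $E_i(\alpha)=\sum_{n\le N}\Lambda(n)\Psi_i(n)\e(\alpha n)$, each resulting piece is an integral $\int_0^1(\cdots)\e(-N\alpha)\dd\alpha$ of a product of three of the $U_i,E_i$.

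The main term is the all--smooth piece $\int_0^1 U_1U_2U_3\,\e(-N\alpha)\dd\alpha=\gamma_1\gamma_2\gamma_3\sum_{n_1+n_2+n_3=N}\prod_i\Lambda(n_i)n_i^{\gamma_i-1}$. This is a classical ternary Goldbach sum with the smooth weights $n_i^{\gamma_i-1}$, so Vinogradov's quantitative three--primes theorem (major/minor arc analysis, with $\|U_i\|_2\asymp N^{\gamma_i-1/2}\log^{1/2}N$ small enough on the minor arcs) evaluates it as $\mathfrak{S}(N)$ times the Dirichlet integral $\gamma_1\gamma_2\gamma_3\int\!\!\int_{t_1+t_2\le N}\prod_i t_i^{\gamma_i-1}\,\dd t_1\dd t_2$ with $t_3=N-t_1-t_2$. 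The latter equals $\gamma_1\gamma_2\gamma_3 N^{\gamma_1+\gamma_2+\gamma_3-1}\Gamma(\gamma_1)\Gamma(\gamma_2)\Gamma(\gamma_3)/\Gamma(\gamma_1+\gamma_2+\gamma_3)$, and together with the $\log^{-3}N$ from the partial summation this reproduces exactly the constant and the singular series \eqref{eq:ss} in \eqref{eq:PS3}.

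For the error pieces I would first dispose of those containing a single sawtooth factor, e.g. $\int_0^1 E_1U_2U_3\,\e(-N\alpha)\dd\alpha\ll\sup_\alpha|E_1(\alpha)|\cdot\|U_2\|_2\|U_3\|_2$. Applying Vaaler's approximation (Lemma~\ref{lem:Vaaler}) to $\Psi_1$ and a partial summation turns $\sup_\alpha|E_1(\alpha)|$ into the Balog--Friedlander exponential sum, and the weak hypothesis (used at every truncation $t\le N$, uniformly in $\alpha$) gives $\sup_\alpha|E_1(\alpha)|\ll N^{\gamma_1}\log^{-A}N$; since $\|U_2\|_2\|U_3\|_2\asymp N^{\gamma_2+\gamma_3-1}\log N$, this piece contributes $\ll N^{\gamma_1+\gamma_2+\gamma_3-1}\log^{1-A}N$, which is $o$ of the main term once $A>4$.

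The hard part will be the pieces carrying two or three sawtooth factors, such as $\int_0^1 E_1E_2U_3\,\e(-N\alpha)\dd\alpha$ and $\int_0^1 E_1E_2E_3\,\e(-N\alpha)\dd\alpha$. These are genuinely delicate: because $|\Psi_i(n)|$ is typically of size $n^{\gamma_i-1}$, the triangle--inequality estimate of $\int E_1E_2U_3\,\e(-N\alpha)\dd\alpha$ is already of the same order $N^{\gamma_1+\gamma_2+\gamma_3-1}$ as the main term, so no oscillation may be wasted; and the sawtooth sums have large second moment $\|E_i\|_2\asymp N^{\gamma_i/2}\log^{1/2}N$ (forced by the $O(1)$ jumps of $\psi$), so the crude splitting $\sup_\alpha|E_1|\cdot\|E_2\|_2\|U_3\|_2$ that sufficed above now fails by a fixed power of $N$. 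The resolution I would pursue is to expose the oscillation by inserting Vaaler's approximation into each sawtooth factor \emph{simultaneously}, reducing the two-- and three--fluctuation pieces to bilinear and trilinear exponential sums in auxiliary variables $h_1,h_2,\dots$ weighted against $\e(\alpha n_i+h_i(n_i+u)^{\gamma_i})$, and then combining the Balog--Friedlander transfer with a mean--value (additive energy / large sieve) estimate to extract cancellation across the two free summation variables. This bilinear cancellation step is the technical heart of the argument, and the place where the quantitative strength of the hypothesis, rather than a soft Fourier transfer, is indispensable.
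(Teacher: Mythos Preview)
The paper does not prove Lemma~\ref{lem:SDP}; it is quoted verbatim as \cite[Corollary~3.2]{SDP}. So there is no in-paper argument to compare against, and what you are proposing is in effect a re-derivation of the Sun--Du--Pan reduction.

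Your treatment of the all-smooth main term and of the pieces carrying a single sawtooth factor is correct and is the Balog--Friedlander mechanism: $\sup_\alpha|E_i|\ll N^{\gamma_i}\log^{-A}N$ follows from the hypothesis via partial summation, and the smooth factors have $\|U_j\|_2\asymp N^{\gamma_j-1/2}\log^{1/2}N$, so one $L^\infty$ against two $L^2$'s wins. You also correctly diagnose the obstruction for two or three sawtooth factors: $\|E_i\|_2\asymp N^{\gamma_i/2}\log^{1/2}N$, so any splitting that spends an $L^2$ on a sawtooth factor loses a fixed power $(1-\gamma_i)/2$ of $N$.

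The gap is in how you propose to close this. Your plan---Vaaler-expand every sawtooth simultaneously and look for bilinear/trilinear cancellation via additive energy or the large sieve---is the scheme of \cite{BaFr}, and that scheme succeeds only because the hypothesis there carries a \emph{power} saving $O(N^{1-\delta})$ with $\delta>0$: one checks, for instance, that $\sup_\alpha|E_1|\cdot\|E_2\|_2\cdot\|U_3\|_2$ beats the main term precisely when $\delta>(1-\gamma_2)/2$. This is exactly why the parameter $\delta$ appears in Theorems~\ref{thm:delta} and~\ref{thm:GBF}, and why the historical remarks opening Section~\ref{sec:8} describe the traditional constraints as relations of the form $A(1-\gamma)+B\delta<1$. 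Under the merely \emph{weak} hypothesis (log-power saving, $\delta=0$) the multilinear exponential sums you write down remain too large by a fixed power of $N$, and neither the large sieve nor an additive-energy bound can recover a polynomial loss from a logarithmic gain. The whole point of \cite{SDP} is to remove the need for $\delta>0$, and their argument does not go through the sawtooth expansion you set up; to prove the lemma under the weak hypothesis you need the new ideas of \cite{SDP}, which your proposal does not yet contain.
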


It is trivial that the Balog-Friedlander condition implies the weak one. Applying Lemma~\ref{lem:SDP} and combining Theorem~\ref{thm:wb}, we complete the proof of Theorem~\ref{thm:3ps}. 
	
\section{Piatetski-Shapiro primes in arithmetic progressions}
\label{sec:6}
The construction to this problem is written with details in \cite{GLZ2}. We only give the main ideas and the proof of the core part. 

By Lemma \ref{lem:PS}, we have
\begin{equation}\label{pi-expan}
 \pi^{(c)}(x;q,a)=\Sigma_1(x)+\Sigma_2(x)+O(1),
\end{equation}
where
\begin{equation*}
\Sigma_1(x) =\gamma\sum_{\substack{p\leqslant x\\ p\equiv a \pmod q}}p^{\gamma-1},
\end{equation*}
\begin{equation*}
\Sigma_2(x) =\sum_{\substack{p\leqslant x\\ p\equiv a \pmod q}}
\big(\psi(-(p+1)^\gamma)-\psi(-p^\gamma)\big).
\end{equation*}
By using a partial summation, it gives that
$$
\Sigma_1(x) = \gamma x^{\gamma-1}\pi(x;q,a)-\gamma(\gamma-1)\int_2^xu^{\gamma-2}\pi(u;q,a)\mathrm{d}u.
$$
Next, we turn our attention to $\Sigma_2(x)$. By a partial summation and a splitting argument, we only need to give the upper bound estimate of the following sum 
$$
\mathcal{S}=\sum_{\substack{n \sim N \\ n\equiv a \pmod q}} \Lambda(n)\big(\psi(-(n+1)^\gamma)-\psi(-n^\gamma)\big),
$$
for $N \le x$ since we have $\Sigma_2(x) \ll \cS \log x $. 
According to Vaaler's approximation, i.e. Lemma \ref{lem:Vaaler}, we write
\begin{equation}\label{S-decompo}
\mathcal{S}=\mathcal{S}_1+O(|\mathcal{S}_2|),
\end{equation}
where
\begin{equation*}
\mathcal{S}_1=\sum_{\substack{n \sim N \\ n\equiv a \pmod q}} \Lambda(n)
\sum_{0<|h|\leqslant H}a_h\big(\mathbf{e}(h(n+1)^\gamma)-\mathbf{e}(hn^\gamma)\big),
\end{equation*}
\begin{equation*}
\mathcal{S}_2=\sum_{\substack{n\sim N \\ n\equiv a \pmod q}} \Lambda(n)
\sum_{|h|\leqslant H}b_h\big(\mathbf{e}(h(n+1)^\gamma)+\mathbf{e}(hn^\gamma)\big),
\end{equation*}
with $H = x^{1-\gamma + \eps}$. 

Firstly, we shall consider the upper bound of $\mathcal{S}_1$. By a partial summation (see \cite{GLZ2} for details), we have 
\begin{equation}
\label{eq:S1}
 \mathcal{S}_1 \ll
x^{\gamma-1}\cdot\max_{t \sim N}\sum_{0<|h|\leqslant H}  \Bigg|\sum_{\substack{N<n\leqslant t\\ n\equiv a \pmod q}}  \Lambda(n)\mathbf{e}(hn^\gamma)\Bigg|.
\end{equation}
By using the well--known orthogonality
\begin{equation*}
 \frac{1}{q}\sum_{m=1}^q\mathbf{e}\bigg(\frac{(n-a)m}{q}\bigg)=
 \begin{cases}
   1, & \textrm{if $q|n-a$}, \\
   0, & \textrm{if $q\nmid n-a$},
 \end{cases}
\end{equation*}
we can represent the innermost sum in \eqref{eq:S1} as
\begin{equation}\label{inner-trans}
  \sum_{\substack{N<n\leqslant t\\ n\equiv a \pmod q}}\Lambda(n)\mathbf{e}(hn^\gamma)
  =\frac{1}{q}\sum_{m=1}^q\sum_{N<n\leqslant t}\Lambda(n)\mathbf{e}\bigg(hn^\gamma+\frac{(n-a)m}{q}\bigg).
\end{equation}
From (\ref{eq:S1}) and (\ref{inner-trans}), we know that it suffices to estimate
\begin{equation*}
  \max_{t \sim N}\sum_{0<|h|\leqslant H}
  \Bigg|\sum_{N<n\leqslant t}\Lambda(n)\mathbf{e}(hn^\gamma+nmq^{-1})\Bigg|.
\end{equation*}
Applying Corollary~\ref{cor:eps}, the bound of $\cS_1$ is satisfied.

Now, we focus on the upper bound of $\mathcal{S}_2$. The contribution from $h=0$ is
$$
2b_0\sum_{\substack{N < n \leqslant x \\ n\equiv a \pmod q}} \Lambda(n)\ll\frac{b_0x}{\varphi(q)} \ll xH^{-1},
$$
The contribution from $h\neq0$ can be bounded as $\cS_1$. 

\section{Piatetski-Shapiro primes in a Beatty sequence}
\label{sec:7}
The construction of this problem is partially similar to the proof of Theorem~\ref{thm:AP}. Also since we have enough details in~\cite{GLZ}, we omit the details here. 

By \cite[Section 4]{GLZ}, let
$$
H = x^\eps \mand J = x^{1-\gamma + \eps}.
$$ 
It is sufficient to prove that
\begin{equation}\label{eq:cT2}
\max_{x/2<t\leqslant x}  \sum_{0 < |h| \le H} |h|^{-1} \sum_{0 < j \le J} |\cT_4| \ll x^{1-\varepsilon}, \end{equation}
where
$$
\cT_4=\sum_{x/2<n\le t}\Lambda(n)\cdot\e\(jn^\gamma + \omega h n-\omega h \beta ) \)
$$
and $\omega = \alpha^{-1}$.

Applying Corollary~\ref{cor:eps} to \eqref{eq:cT2}, the proof is done. 

\section{A generalized Balog-Friedlander condition}
\label{sec:8}

In this section, for historical context, we extend the idea of Section~\ref{sec:4.1} and show the following Theorem.
\begin{theorem}
	\label{thm:delta}
	Let $(k,l)$ be an exponent pair such that
	$$
	4k-2l+1>0 
	$$
	and $\gamma$, $0\leqslant \delta\leqslant 1-\gamma$ subjected to
	\begin{align*}
		\frac{12k-2l+13}{-2l+3}(1-\gamma)+\frac{20k-4l+16}{-2l+3}\delta<1
	\end{align*}
	and
	\begin{align*}
		\frac{15}{2}(1-\gamma)+9\delta<1.
	\end{align*}
	Assume further that $H\leqslant x^{1-\gamma + \delta + \eps}$ and $0\leqslant u \leqslant 1$. Then
	\begin{align*}
		\min\biggl\{1,\frac{x^{1-\gamma}}{H}\biggr\}\sum_{h\sim H}\bigg| \sum_{n \sim x}\Lambda(n)\e\big(\alpha n + h(n+u)^{\gamma}\big) \bigg| \ll x^{1-\delta -\eps}.
	\end{align*}
	\end{theorem}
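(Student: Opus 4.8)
The plan is to repeat the proof of Theorem~\ref{thm:eps} from Section~\ref{sec:4} step by step, carrying the parameter $\delta$ through every estimate; setting $\delta=0$ recovers that argument. Write $f(n)=\sum_{h\sim H}\delta_h\e(\alpha n+h(n+u)^{\gamma})$ and apply the Heath-Brown identity (Lemma~\ref{lem:HB2}) to split $\sum_{n\sim x}\Lambda(n)f(n)$ into $O(\log^{10}x)$ sums, each of Type~I with $N>Z$ or of Type~II with $U<N<V$; here $U$, $V$, $Z$ are powers of $x$ whose exponents now involve $\gamma$, $\delta$, $k$, $l$. Since all the bounds below carry a factor $\max\{1,Hx^{\gamma-1}\}$, reflecting the length of the $h$-sum relative to the critical size $H\asymp x^{1-\gamma}$, it is enough to bound each Type~I and Type~II piece by $\max\{1,Hx^{\gamma-1}\}\,x^{1-\delta-\eps}$; multiplying through by $\min\{1,x^{1-\gamma}/H\}$ then gives the theorem.

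For the Type~II sums I would use the $\delta$-dependent form of Proposition~2 of~\cite{BaFr} (the analogue of Lemma~\ref{lem:HB1}): for $\tfrac56<\gamma<1$ and $N$ in a window whose lower endpoint matches the size of $H$ and whose upper endpoint is $x^{5\gamma-4-c_1\delta-\eps}$ for a suitable $c_1$, one has the bound $\ll\max\{1,Hx^{\gamma-1}\}x^{1-\delta-\eps}$; this is Balog and Friedlander's argument with $\delta$ inserted and requires no new idea. With the Heath-Brown choices $U=2^{-10}x^{1-\gamma+\delta+\eps}$ and $V=4x^{1/3}$, the condition $V\le x^{5\gamma-4-c_1\delta-\eps}$ is exactly the second hypothesis $\tfrac{15}{2}(1-\gamma)+9\delta<1$.

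The core is the $\delta$-analogue of Lemma~\ref{lem:type1} (the Type~I estimate), which I would prove along the identical chain: (i)~a direct estimate of $\sum_{n\sim N}\e(f_0(m,n))$ by van der Corput's second-derivative test (Lemma~\ref{lem:GK3}), disposing of large $N$; (ii)~for the remaining $N$, replace $f_0$ by its truncation $f_1$ at cost $\ll HMN\cdot Hx^{\gamma-2}\ll x^{1-\gamma+2\delta+\eps}$ (admissible because $H\le x^{1-\gamma+\delta+\eps}$), then Cauchy--Schwarz over $m$ and the Weyl--van der Corput inequality (Lemma~\ref{lem:A}) over $n$ with the enlarged parameter $Q=\lfloor x^{2(1-\gamma)+2\delta+\eps}\rfloor+1$; (iii)~two partial summations, reducing to $\sum_m\sum_n\e(f_3(m,n))$ with $f_3(m,n)=hm^{\gamma}\big((n+q)^{\gamma}-n^{\gamma}\big)+\theta m$; (iv)~the B-process (Lemma~\ref{lem:B}) on the $m$-sum, introducing $m_\nu$ and $f_4(\nu,n)$ exactly as in~\eqref{eq:mnu}; (v)~Abel summation to remove the weight $(\partial^2 f_3/\partial m^2)^{-1/2}$, which is monotone in $\nu$ and $n$; (vi)~the exponent pair $(k,l)$ applied to $\sum_n\e(f_4(\nu,n))$, using $\partial^r f_4/\partial n^r\asymp qHx^{\gamma}N^{-1-r}$ from Fa\`a di Bruno's formula. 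Carrying $\delta$ through yields an upper bound on $N$ from the B-process error terms, a lower bound of the form $N^{4k-2l+1}\gg x^{(4k+6)(1-\gamma)+(2k-1)+c_2\delta+\eps}$ from the exponent-pair main term, together with $4k-2l+1>0$; demanding that this $N$-window be nonempty, and feeding it into the Heath-Brown requirement $x\ge 64Z^2U$ (with $Z$ the $\delta$-shifted lower endpoint of the Type~I window), sharpens to the first hypothesis $\tfrac{12k-2l+13}{-2l+3}(1-\gamma)+\tfrac{20k-4l+16}{-2l+3}\delta<1$.

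I expect the main obstacle to be bookkeeping rather than any new idea: one must check that every ``trivial'' or error term that was $\ll x^{1-\eps}$ in the $\delta=0$ proof stays $\ll\max\{1,Hx^{\gamma-1}\}x^{1-\delta-\eps}$ here --- the delicate ones being the $f_0\to f_1$ replacement (which forces a side condition like $3\delta<\gamma$), the leading term produced by Weyl--van der Corput (cf.\ the first term of~\eqref{5}), and the $O(\cdot)$ terms from the B-process --- and that after the Heath-Brown decomposition the Type~I range $N>Z$ and the Type~II range $U<N<V$ between them exhaust all $N$; these two demands are precisely what pin down the two displayed inequalities in $\gamma$ and $\delta$. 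With the endpoint comparisons in hand, summing the $O(\log^{10}x)$ pieces completes the proof.
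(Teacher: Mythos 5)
Your proposal follows essentially the same route as the paper's own proof: the Heath--Brown identity with $\delta$-shifted parameters $U$, $V$, $Z$, the $\delta$-dependent version of Balog--Friedlander's Proposition~2 for the Type~II sums, and the identical Type~I chain (van der Corput for large $N$, truncation $f_0\to f_1$, Cauchy and Weyl--van der Corput with $Q=\lfloor x^{2(1-\gamma)+2\delta+\eps}\rfloor+1$, B-process, Abel summation, exponent pair), with the two displayed hypotheses arising exactly as you say from $x\ge 64Z^2U$ and $V\le x^{5\gamma-4-6\delta-\eps}$. The only deviations are small bookkeeping constants (the paper takes $U=2^{-10}x^{1-\gamma+2\delta+\eps}$ rather than your $x^{1-\gamma+\delta+\eps}$, and uses $(1-\gamma)+2\delta<1$ where you anticipate $3\delta<\gamma$), which do not change the argument.
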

Setting $\delta=0$ in Theorem~\ref{thm:delta} yields Theorem~\ref{thm:eps}. Previously, it was common to study parameters $\gamma$, $\delta$ satisfying a relation of the form
$$
A(1-\gamma) + B\delta < 1,
$$
with the order being $O(x^{1-\delta-\epsilon})$. While Sun, Du and Pan's work \cite{SDP} simplifies the problem to the $\delta=0$ case, we prove the broader theorem to align with the traditional perspective. Similar to the proof of Theorem \ref{thm:3} and also Page 50 in \cite{BaFr}, we also obtain a generalized but more traditional Balog-Friedlander condition. We omit the proof from Theorem \ref{thm:delta} to Theorem \ref{thm:GBF}. 

\begin{theorem}
	\label{thm:GBF}
	Let $(k,l)$ be an exponent pair such that
	$$
	4k-2l+1>0 
	$$
	and $\gamma$, $0\leqslant \delta\leqslant 1-\gamma$ subjected to
	\begin{align*}
		\frac{12k-2l+13}{-2l+3}(1-\gamma)+\frac{20k-4l+16}{-2l+3}\delta<1
	\end{align*}
	and
	\begin{align*}
		\frac{15}{2}(1-\gamma)+9\delta<1.
	\end{align*}
	For every sufficiently large $N$, it follows that
$$
		\sum_{\substack{p \le N \\ p \in \mathcal{N}^{(c)}}} c p^{1-\frac{1}{c}}\log p \cdot \e(\alpha p) = \sum_{p \le N} \log p\cdot \e(\alpha p) + O ( N^{1-\delta-\eps} )
$$
	uniformly for $\alpha$. 
\end{theorem}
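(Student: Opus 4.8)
The plan is to run the proof of Theorem~\ref{thm:wb} with the extra parameter $\delta$ carried through all estimates, replacing the appeal to Corollary~\ref{cor:eps} by one to Theorem~\ref{thm:delta}. The one genuinely new feature is that the truncation level of Vaaler's polynomial is now $H_0=x^{1-\gamma+\delta+\eps}$ rather than $x^{1-\gamma+\eps}$, so a single uniform application of the exponential sum bound over a dyadic block $h\sim H$ would only yield $x^{1+\eps}$; instead one must split the sum over $h$ at the threshold $|h|\asymp x^{1-\gamma}$ and exploit the two regimes of the factor $\min\{1,x^{1-\gamma}/H\}$ in Theorem~\ref{thm:delta} separately. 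That factor was built into Theorem~\ref{thm:delta} for exactly this purpose.

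First I would set $T(\alpha)=\gamma^{-1}\sum_{p\le N,\,p\in\cN^{(c)}}p^{1-\gamma}\log p\cdot\e(\alpha p)$; since $cp^{1-1/c}\log p=\gamma^{-1}p^{1-\gamma}\log p$, the claim is $T(\alpha)=\sum_{p\le N}\log p\cdot\e(\alpha p)+O(N^{1-\delta-\eps})$. By Lemma~\ref{lem:PS}, $T(\alpha)=S_1+S_2+O(\log^2N)$ with $S_1=\sum_{p\le N}\log p\cdot\e(\alpha p)$ the main term and
\[
S_2=\frac1\gamma\sum_{p\le N}\e(\alpha p)\,p^{1-\gamma}\log p\,\big(\psi(-(p+1)^\gamma)-\psi(-p^\gamma)\big),
\]
so it suffices to show $S_2\ll N^{1-\delta-\eps}$. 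Partial summation over primes (the prime-power tail is $\ll N^{3/2-\gamma}\log N$, which is admissible because $\tfrac{15}{2}(1-\gamma)+9\delta<1$ forces $1-\gamma<\tfrac12-\delta$) and a dyadic splitting $x\le N$ reduce matters to
\[
\sum_{n\sim x}\Lambda(n)\e(\alpha n)\,n^{1-\gamma}\big(\psi(-(n+1)^\gamma)-\psi(-n^\gamma)\big)\ll x^{1-\delta-\eps}.
\]
Vaaler's approximation (Lemma~\ref{lem:Vaaler}) with $H_0=x^{1-\gamma+\delta+\eps}$ now splits the left-hand side as $S_3+O(S_4)$ exactly as in Theorem~\ref{thm:wb}; here $H_0$ is forced to be this large by the $h=0$ term of $S_4$, which is $\ll H_0^{-1}\sum_{n\sim x}\Lambda(n)n^{1-\gamma}\ll x^{2-\gamma}/H_0=x^{1-\delta-\eps}$.

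The core is the estimate of $S_3=\gamma^{-1}\sum_{0<|h|\le H_0}a_h\sum_n\Lambda(n)\e(\alpha n)n^{1-\gamma}\big(\e(h(n+1)^\gamma)-\e(hn^\gamma)\big)$, with $a_h\ll1/|h|$. For $0<|h|\le x^{1-\gamma}$ I would write $\e(h(n+1)^\gamma)-\e(hn^\gamma)=2\pi i\gamma h\int_0^1(n+u)^{\gamma-1}\e(h(n+u)^\gamma)\,du$; since $n^{1-\gamma}(n+u)^{\gamma-1}\asymp1$ with total variation $O(1/x)$ on $n\sim x$, partial summation together with $|a_h||h|\ll1$ bounds the $h$-th term by $\int_0^1\max_t|\sum_{n\le t}\Lambda(n)\e(\alpha n+h(n+u)^\gamma)|\,du$. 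For $x^{1-\gamma}<|h|\le H_0$ I would instead treat the two exponentials separately and strip off the weight $n^{1-\gamma}$ (total variation $\asymp x^{1-\gamma}$) by partial summation, bounding the $h$-th term by $\tfrac{x^{1-\gamma}}{|h|}\sum_{u\in\{0,1\}}\max_t|\sum_{n\le t}\Lambda(n)\e(\alpha n+h(n+u)^\gamma)|$. In both regimes, splitting the $h$-sum into dyadic blocks $h\sim H$ and applying Theorem~\ref{thm:delta} (with $\min\{1,x^{1-\gamma}/H\}=1$ for $H\le x^{1-\gamma}$ and $=x^{1-\gamma}/H$ otherwise, taking $u=0$ and $u=1$, and treating the maximum over subintervals as in Theorem~\ref{thm:wb} and~\cite{BaFr}) bounds each block by $O(x^{1-\delta-\eps})$; summing over the $O(\log x)$ relevant blocks gives $S_3\ll x^{1-\delta-\eps}$. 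The bound $S_4\ll x^{1-\delta-\eps}$ is similar: its $h=0$ term was dealt with above, and for $h\ne0$ one has $b_h\ll1/H_0$, so stripping $n^{1-\gamma}$ and summing Theorem~\ref{thm:delta} over dyadic $H\le H_0$ yields a contribution $\ll\tfrac{x^{1-\gamma}}{H_0}\sum_{H\le H_0}\max\{1,H/x^{1-\gamma}\}x^{1-\delta-\eps}\ll\tfrac{x^{1-\gamma}}{H_0}\,x^{\delta+\eps}x^{1-\delta-\eps}=x^{1-\delta-\eps}$, the geometric sum being dominated by $H=H_0$. Combining these gives $S_2\ll N^{1-\delta-\eps}$ and hence the theorem.

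The step I expect to be the real obstacle is this dyadic split in $S_3$: one has to balance the smallness of $\e(h(n+1)^\gamma)-\e(hn^\gamma)$ (of size $\asymp|h|x^{\gamma-1}$) against the $1/|h|$ decay of $a_h$ and against the size $\asymp x^{1-\gamma}$ of the weight $n^{1-\gamma}$, and the threshold $|h|\asymp x^{1-\gamma}$ is the unique choice for which both sub-sums close — below it the factor $|h|$ coming from the difference exactly cancels $a_h$ and Theorem~\ref{thm:delta} is invoked in its $\min=1$ form, while above it the $1/|h|$ decay survives and the $\min=x^{1-\gamma}/H$ form carries the sum. Everything else — the partial summations, the prime-power error, and the bookkeeping of logarithms and of the various $\eps$'s — is routine.
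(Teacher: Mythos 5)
Your proposal is correct and follows exactly the route the paper intends but omits (the paper only says the deduction from Theorem~\ref{thm:delta} is ``similar to the proof of Theorem~\ref{thm:wb} and Page 50 of \cite{BaFr}''): you reduce to $S_2$, apply Vaaler with $H_0=x^{1-\gamma+\delta+\eps}$, and invoke Theorem~\ref{thm:delta} dyadically. In particular you correctly identify the one point where the argument differs from the $\delta=0$ case, namely the split of the $h$-range at $|h|\asymp x^{1-\gamma}$ so that the factor $|a_h||h|\ll 1$ closes the small-$h$ blocks while the factor $\min\{1,x^{1-\gamma}/H\}=x^{1-\gamma}/H$ closes the large-$h$ blocks, which is precisely what that factor in Theorem~\ref{thm:delta} is built for.
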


Now we give the proof of Theorem \ref{thm:delta}. Consider sums of the form
$$
\min\biggl\{1,\frac{x^{1-\gamma}}{H}\biggr\}\sum_{h \sim H}\delta_h \mathop{\sum_{m\sim M}\sum_{n\sim N}}_{mn\sim x}a_{m}b_{n}\e\big(\alpha mn + h(mn + u)^{\gamma}\big),
$$
where $|\delta_h| \leqslant 1$, $H \leqslant x^{1-\gamma+\delta+\eps}$ and $0\leqslant u \leqslant 1$.
If the coefficients $a_{m}$ and $b_{n}$ satisfy the conditions
$$
|a_{m}| \leqslant 1,\quad b_{n} = 1 \quad \text{or}\quad b_{n} = \log n,
$$
we denote the sum by $S'_{\uppercase\expandafter{\romannumeral1}}$, and if they satisfy the conditions
$$
|a_{m}| \leqslant 1,\quad |b_{n}|\leqslant 1,
$$ 
we denote it by $S'_{\uppercase\expandafter{\romannumeral2}}$.

Following the Proposition 2 of \cite{BaFr}, we have
\begin{lemma}
	\label{lem:HB1 2}
	Let $\gamma$, $0\leqslant \delta\leqslant 1-\gamma$ and $N$
	satisfies the condition
	$$6(1-\gamma) + 8\delta <1$$
	and
	$$
	x^{1-\gamma +2\delta +\eps} \leqslant N \leqslant x^{5\gamma -4-6\delta-\eps},
	$$
	then 
	$$
	S'_{\uppercase\expandafter{\romannumeral2}} \ll x^{1 -\delta -\eps}.
	$$
\end{lemma}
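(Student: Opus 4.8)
The plan is to essentially repeat the argument of Proposition~2 of \cite{BaFr}, now carrying the extra parameter $\delta$ and the damping factor $\min\{1,x^{1-\gamma}/H\}$ through every estimate; specialising $\delta=0$ recovers Lemma~\ref{lem:HB1}, which is precisely that proposition. First I would make the routine preliminary reductions. The case $b_n=\log n$ is removed by partial summation, so we may assume $b_n\equiv 1$. Since $0\le u\le 1$,
\[
h(mn+u)^\gamma=h(mn)^\gamma+\gamma u\,h(mn)^{\gamma-1}+O\!\big(Hx^{\gamma-2}\big),
\]
so replacing $\e\big(h(mn+u)^\gamma\big)$ by $\e\big(h(mn)^\gamma+\gamma u\,h(mn)^{\gamma-1}\big)$ costs at most
\[
\min\{1,x^{1-\gamma}/H\}\cdot HMN\cdot Hx^{\gamma-2}\ll H\ll x^{1-\gamma+\delta+\eps},
\]
which is $\ll x^{1-\delta-\eps}$ because $2\delta<\gamma$ throughout the region $6(1-\gamma)+8\delta<1$. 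After this the phase is $\alpha mn+h(mn)^\gamma+\gamma u\,h(mn)^{\gamma-1}$, in which every summand is a product of a function of $m$ alone and a function of $n$ alone: this bilinear shape is exactly what the argument of \cite{BaFr} exploits, and it is unharmed by the linear term $\e(\alpha mn)$ (which in any case already occurs in the bilinear sums of \cite{BaFr}).

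With these reductions in place I would run the bilinear estimate of \cite{BaFr} essentially word for word. That argument couples a Cauchy--Schwarz step (to detach the coefficients $a_m$, and if convenient the sum over $h$) with the van der Corput apparatus---the Weyl--van der Corput inequality (Lemma~\ref{lem:A}), Poisson summation (Lemma~\ref{lem:B}) and the second--derivative test (Lemma~\ref{lem:GK3})---applied to the pure exponential sum in the surviving variable, in which the $\gamma$-th power and the linear term are now separated. The genuinely new point is the accounting: each Cauchy--Schwarz step incurs a loss that has to be paid for out of $\min\{1,x^{1-\gamma}/H\}$, and the sum over $h$ now has the longer range $H\le x^{1-\gamma+\delta+\eps}$, carrying an extra $x^\delta$. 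Carrying these through, and optimising internally over the shift and Poisson parameters, the bound for $\min\{1,x^{1-\gamma}/H\}\,S'_{\mathrm{II}}$ comes out as a sum of a bounded number of monomials in $H$, $M$, $N$, $x$, together with the secondary error terms produced by Lemmas~\ref{lem:A}, \ref{lem:B} and \ref{lem:GK3}.

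Finally I would require every one of these terms to be $\ll x^{1-\delta-\eps}$. Using $MN\asymp x$ and $H\le x^{1-\gamma+\delta+\eps}$, and taking as a guide the $\delta=0$ outcome $x^{1-\gamma+\eps}\le N\le x^{5\gamma-4-\eps}$ of Lemma~\ref{lem:HB1}, I expect the constraints to collapse exactly to
\[
x^{1-\gamma+2\delta+\eps}\le N\le x^{5\gamma-4-6\delta-\eps},
\]
a range that is non-empty precisely when $1-\gamma+2\delta<5\gamma-4-6\delta$, i.e.\ when $6(1-\gamma)+8\delta<1$. I expect the main---and essentially the only---difficulty to be exactly this bookkeeping: spending the damping factor at the right Cauchy--Schwarz step so that the $\delta$- and $(1-\gamma)$-dependence lands in the stated endpoints rather than something weaker (in particular so that the diagonal contribution does not secretly demand a larger lower bound on $N$), and checking that the residual error terms from Lemmas~\ref{lem:A}, \ref{lem:B} and \ref{lem:GK3}---the $\lambda^{-1/2}$, $F^{-1/2}N$ and $\log$-type contributions---are all $\ll x^{1-\delta-\eps}$ in that same range. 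Since the $\delta=0$, $u=0$ case is precisely Proposition~2 of \cite{BaFr}, the analytic content is unchanged; only the $\delta$-dependent bookkeeping is new.
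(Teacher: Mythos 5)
Your plan is essentially the paper's own approach: the paper offers no proof of this lemma at all beyond the phrase ``Following the Proposition 2 of \cite{BaFr}'', since the statement (including the damping factor $\min\{1,x^{1-\gamma}/H\}$ and the $\delta$-dependent range of $N$) is essentially that proposition verbatim, and your proposal is precisely to track Balog--Friedlander's argument with the $\delta$-bookkeeping. The specific reductions you do carry out (removing $u$ at cost $\ll H\ll x^{1-\gamma+\delta+\eps}\ll x^{1-\delta-\eps}$, and the check that the range of $N$ is non-empty exactly when $6(1-\gamma)+8\delta<1$) are correct.
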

For $S'_{I}$, we will prove the following lemma.
\begin{lemma}
	\label{lem:type1 2}
	Assume $(k,l)$ is an exponent pair such that
	\begin{equation}
		4k+2l-1>0,  \notag
	\end{equation} 
	and $\gamma,~0\leqslant \delta \leqslant 1-\gamma,~N$ satisfy
	\begin{equation}
		\frac{12k-4l+8}{2k-2l+2}(1-\gamma) + \frac{14k-4l+9}{2k-2l+2}\delta < 1 \notag
	\end{equation}
	with
	\begin{equation}
		N \geqslant \min \big\{ x^{(1-\gamma) + \frac{1}{2} + \frac{3}{2}\delta + \eps}, \max\{x^{\frac{4k+6}{4k-2l+1}(1-\gamma) + \frac{2k-1}{4k-2l+1}+ \frac{6k+7}{4k-2l+1}\delta+ \eps},x^{2(1-\gamma) +3\delta+ \eps}\} \big\} ,\notag
	\end{equation}
	then
	$$
	S'_{\uppercase\expandafter{\romannumeral1}} \ll x^{1-\delta-\eps}.
	$$
\end{lemma}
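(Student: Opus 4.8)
The plan is to rerun, essentially line by line, the two-part argument that proved Lemma~\ref{lem:type1}, now carrying the auxiliary parameter $\delta$ and the extra weight $\min\{1,x^{1-\gamma}H^{-1}\}$ through every step. As in that proof, one first removes $b_n$ by partial summation and sets $f_0(m,n)=h(mn+u)^\gamma+\alpha mn$, so that
$$
S'_{\uppercase\expandafter{\romannumeral1}}\le\min\{1,x^{1-\gamma}H^{-1}\}\sum_{h\sim H}\sum_{m\sim M}\Big|\sum_{n\sim N}\e\big(f_0(m,n)\big)\Big|.
$$
When the weight equals $1$ one has $H\le x^{1-\gamma}$ automatically; when it equals $x^{1-\gamma}H^{-1}$, its product with the crude bound $H\le x^{1-\gamma+\delta+\eps}$ is precisely what upgrades a generic $x^{1-O(\eps)}$ saving to the required $x^{1-\delta-\eps}$. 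These two regimes should be kept separate throughout.

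For the cheap estimate I would apply van der Corput's second-derivative bound (Lemma~\ref{lem:GK3}) to the $n$-sum via $\partial^2 f_0/\partial n^2\asymp Hx^\gamma N^{-2}$, sum trivially over $m$ and $h$, and fold in the weight together with $H\le x^{1-\gamma+\delta+\eps}$; this gives $S'_{\uppercase\expandafter{\romannumeral1}}\ll x^\eps\big(x^{5/2-\gamma+\delta/2}N^{-1}+x^{3/2-\gamma+\delta}\big)$, hence $\ll x^{1-\delta-\eps}$ once $N\ge x^{(1-\gamma)+1/2+3\delta/2+\eps}$, which is the first alternative in the lower bound on $N$.

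For the main estimate I would reproduce the A-process / approximation / B-process / Abel-summation / exponent-pair chain. One replaces $f_0$ by $f_1(m,n)=\alpha mn+h(mn)^\gamma+\gamma uh(mn)^{\gamma-1}$ at a cost that the weight keeps $\ll x^{1-\delta-\eps}$, applies Cauchy in $m$ and the Weyl--van der Corput inequality (Lemma~\ref{lem:A}) in $n$ with shift modulus $Q=\lfloor x^{2(1-\gamma)+2\delta+\eps}\rfloor+1$ so that the diagonal term is $\ll x^{1-\delta-\eps}$, and then, by the same successive partial summations, reduces the phase $f_1(m,n+q)-f_1(m,n)$ to $f_3(m,n)=hm^\gamma\big((n+q)^\gamma-n^\gamma\big)+\theta m$ with $\theta=\{\alpha q\}$; this reduction is valid once $N\ge x^{2(1-\gamma)+3\delta+\eps}$, which is the step responsible for the coefficient $3\delta$ and which subsumes $Q\le N$. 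Applying the B-process (Lemma~\ref{lem:B}) to the $m$-sum yields, exactly as in the display defining $f_4$ and in \eqref{eq:mnu}, the sum $\sum_\nu\sum_n\big(\partial^2 f_3/\partial m^2\big)^{-1/2}\e\big(f_4(\nu,n)\big)$ plus error terms controlled under an upper bound on $N$ of the shape $N\le x^{1-2(1-\gamma)-O(\delta)-\eps}$; the amplitude $\big(\partial^2 f_3/\partial m^2\big)^{-1/2}$ is stripped by Abel summation, being monotone in both $\nu$ and $n$; and bounding $\sum_n\e\big(f_4(\nu,n)\big)$ by the exponent pair $(k,l)$, using $\partial^r f_4/\partial n^r\asymp qHx^\gamma N^{-1-r}$ from Fa\`a di Bruno's formula, leaves $S'_{\uppercase\expandafter{\romannumeral1}}\ll x^{1-\delta-\eps}$ provided $N^{4k-2l+1}\ge x^{(4k+6)(1-\gamma)+(2k-1)+(6k+7)\delta+\eps}$ — the second alternative in the lower bound — with the hypothesis $4k-2l+1>0$ being exactly what turns this into a genuine lower bound on $N$.

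The routine work is the bookkeeping of threading $\delta$ and $\min\{1,x^{1-\gamma}H^{-1}\}$ through each inequality. The delicate point, and the main obstacle, will be to confirm that in every error term — the diagonal A-process term, the two B-process error terms, the approximation error, and the partial-summation reduction — the weight combines with the enlarged range $H\le x^{1-\gamma+\delta+\eps}$ to leave exactly the exponent $1-\delta-\eps$, splitting as needed according to whether $H\le x^{1-\gamma}$ or $H>x^{1-\gamma}$; and then to verify that the three constraints on $N$ (the two lower bounds above together with the B-process upper bound) are simultaneously satisfiable precisely under the displayed inequality relating $\gamma,\delta,k,l$, which here plays the role that $\tfrac{5k-l+3}{6k-2l+4}<\gamma<1$ played in Lemma~\ref{lem:type1}.
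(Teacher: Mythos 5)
Your proposal is correct and follows essentially the same route as the paper: the same two-case structure (a van der Corput second-derivative bound giving the alternative $N\ge x^{(1-\gamma)+\frac12+\frac32\delta+\eps}$, and the approximation/A-process/B-process/Abel/exponent-pair chain with $Q=\lfloor x^{2(1-\gamma)+2\delta+\eps}\rfloor+1$ giving the other alternative), with the weight $\min\{1,x^{1-\gamma}H^{-1}\}$ and the enlarged range $H\le x^{1-\gamma+\delta+\eps}$ threaded through exactly as the paper does, and the same resulting constraints, including the $3\delta$ from the reduction to $f_3$ and the exponent $(6k+7)\delta/(4k-2l+1)$ in the final lower bound on $N$.
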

\begin{proof}
	Firstly, we estimate $S'_{I}$ simply. Following the argument similar to \eqref{1} and Lemma~\ref{lem:GK3}, we have
	\begin{align*}
		S'_{I}\ll \min\biggl\{1,\frac{x^{1-\gamma}}{H}\biggr\} (x^{\gamma/2+1}H^{3/2}N^{-1} + x^{1-\gamma/2}H^{1/2}),
	\end{align*}
	which $\ll x^{1-\delta-\eps}$ when
	$$
	2(1-\gamma)+2\delta<1
	$$
	and
	\begin{align}
		\label{equ:me1}
		N\geqslant x^{(1-\gamma)+\frac{1}{2}+\frac{3}{2}\delta+\eps}.
	\end{align}
	
	Secondly, we estimate $S'_{I}$ differently. A similar approach to \eqref{4} yields
	\begin{align}
		S'_{\uppercase\expandafter{\romannumeral1}} \ll\min\biggl\{1,\frac{x^{1-\gamma}}{H}\biggr\} \sum_{h\sim H}\bigg| \sum_{m\sim M}\sum_{n\sim N}a_m\e\big(f_{1}(m,n)\big) \bigg| + x^{1-\delta-\eps} \label{equ:f1}
	\end{align}
	provided
	$$
	(1-\gamma) + 2\delta<1.
	$$
	Applying Lemma~\ref{lem:A} to the first term in the right-hand side of \eqref{equ:f1} over $n$ and taking parameter
	$$
	Q'=\fl{x^{2(1-\gamma)+2\delta+\eps}}+1,
	$$ 
	we get
	\begin{align}
		S'_{I}\ll \min\biggl\{1,\frac{x^{1-\gamma}}{H}\biggr\}\frac{x^{1/2}}{Q'^{1/2}}\sum_{h\sim H}\bigg( \sum_{1\leqslant q \leqslant Q'}\bigg| \sum_{m\sim M}\sum_{\substack{n\sim N}}\e\big(f_{2}(m,n)\big) \bigg| \bigg)^{1/2} + x^{1-\delta+\eps} \label{equ:f2}
	\end{align}
	provided
	$$
	N\geqslant x^{2(1-\gamma)+2\delta+\eps}.
	$$
	Moreover, from partial summation over $m$ and $n$, we have
	\begin{align}
		\bigg| \sum_{m\sim M}\sum_{\substack{n\sim N}}\e\big(f_{2}(m,n)\big) \bigg| \ll \bigg| \sum_{m\sim M}\sum_{\substack{n\sim N}}\e\big(f_{3}(m,n)\big) \bigg| \label{equ:f23}
	\end{align}
	under the condition
	$$
	N\geqslant x^{2(1-\gamma)+3\delta+\eps},
	$$
	which constitutes one of the final constraints in $N$.
	From Poisson summation formula (Lemma~\ref{lem:B}) (i.e., \eqref{9}-\eqref{13}) and partial summation, we obtain
	\begin{align}
		\bigg| \sum_{m\sim M}\sum_{\substack{n\sim N}}\e\big(f_{3}(m,n)\big) \bigg| \ll &(qHx^{\gamma-2}N)^{-\frac{1}{2}}\bigg| \sum_{\nu}\sum_{\substack{n\sim N}}\e\big(f_{4}(m,n)\big) \bigg|\notag \\&+ N\log{x} + q^{-\frac{1}{2}}H^{-\frac{1}{2}}x^{\frac{1}{2}(1-\gamma)+\frac{1}{2}}N^{\frac{1}{2}}.\label{equ:f34}
	\end{align}
	Note that the second and third terms in the right side of \eqref{equ:f34} vanish when
	$$
	N\leqslant x^{-2(1-\gamma)+1-2\delta-\eps}.
	$$
	Using exponent pair method to the sum over $n$, one has
	\begin{align}
		\sum_{n}\e\big(f_{4}(\nu,n)\big) \ll (qHx^{\gamma}N^{-2})^{k}N^{l} + (qHx^{\gamma}N^{-2})^{-1}\label{equ:f4}
	\end{align}
	and the second term in the right side of \eqref{equ:f4} is admissible when
	$$
	N\leqslant x^{-\frac{2}{3}(1-\gamma)+1-\frac{2}{3}\delta-\eps}.
	$$
	Combining \eqref{equ:f2}, \eqref{equ:f23}, \eqref{equ:f34} and \eqref{equ:f4} with the condition that $(k,l)$ satisfies
	$$
	4k-2l+1>0,
	$$ 
	we calculate
	$$
	S'_{I}\ll x^{1-\delta-\eps}
	$$
	provided
	$$
	\frac{12k-4l+8}{2k-2l+2}(1-\gamma) + \frac{14k-4l+9}{2k-2l+2}\delta < 1 
	$$
	and
	\begin{align}
		\label{equ:me2}
		x^{-2(1-\gamma)+1-2\delta-\eps}\geqslant N \geqslant  \max\{x^{\frac{4k+6}{4k-2l+1}(1-\gamma) + \frac{2k-1}{4k-2l+1}+ \frac{6k+7}{4k-2l+1}\delta+ \eps},x^{2(1-\gamma) +3\delta+ \eps}\}.
	\end{align}
	
	Finally, putting the range of \eqref{equ:me1} and \eqref{equ:me2} together finish the proof of Lemma~\ref{lem:type1 2}.
\end{proof}
	Next, we prove Theorem~\ref{thm:delta}. We apply Lemma~\ref{lem:HB2} with $$f(n) = \sum\limits_{h\sim H}\delta_h \e\big(\alpha n + h(n + u)^{\gamma}\big),$$ $U = 2^{-10}x^{1-\gamma +2\delta +\eps}$, $V= 4x^{1/3}$ and
	$$
	Z=\max \big( \lfloor x^{\frac{4k+6}{4k-2l+1}(1-\gamma) + \frac{2k-1}{4k-2l+1}+ \frac{6k+7}{4k-2l+1}\delta+\eps}\rfloor, \lfloor 5x^{1/3} \rfloor, \lfloor x^{2(1-\gamma) + 4\delta+ \eps}\rfloor \big) + \frac{1}{2}.
	$$
	Lemma~\ref{lem:HB1 2} and Lemma~\ref{lem:type1 2} imply that all assumptions of Lemma~\ref{lem:HB2} are satisfied, with the exception of the conditions $x\geqslant64Z^{2}U$ and $V\leqslant x^{5\gamma-4-6\delta-\eps}$, which is calculated to be
	\begin{align*}
		\frac{12k-2l+13}{-2l+3}(1-\gamma)+\frac{20k-4l+16}{-2l+3}\delta<1
	\end{align*}
	and
	\begin{align*}
		\frac{15}{2}(1-\gamma)+9\delta<1.
	\end{align*}

\section*{Acknowledgement}
This work was supported by the National Natural Science Foundation of China (No. 11901447, 12271422), the Natural Science Foundation of Shaanxi Province (No. 2024JC-YBMS-029). and the Shaanxi Fundamental Science Research Project for Mathematics and Physics (No. 22JSY006).

\end{document}